\newcommand{\Z}{\mathbb{Z}}
\newcommand{\lcm}{\text{lcm}}
\newcommand{\Q}{\mathbb{Q}}
\newcommand{\Spec}{\text{Spec}}
\newcommand{\matcaloe}[1]{\mathcal{O}_{\mathcal{E}}^{ #1}}
\newcommand{\Fisoc}{\mathbf{F}-\mathbf{Isoc}}
\newcommand{\Frac}[1]{\mathbf{Frac}(#1)}
\newcommand{\Mphiok}[2]{\textbf{M}\Phi^\nabla_{ #1, #2}}
\newtheorem{theorem}{Theorem}[section]
\newtheorem{lemma}[theorem]{Lemma}
\newtheorem{proposition}[theorem]{Proposition}
\newtheorem{conjecture}[theorem]{Conjecture}
\newtheorem{corollary}[theorem]{Corollary}
\newtheorem{step}{\indent Step}
\theoremstyle{definition}
\newtheorem{definition}[theorem]{Definition}
\newtheorem{remark}[theorem]{Remark}
\newtheorem{example}[theorem]{Example}
\begin{document}
\title{The monodromy of unit-root $F$-isocrystals with geometric origin}

\author{Joe Kramer-Miller}
\maketitle
\begin{abstract}
Let $C$ be a smooth curve over a finite field of characteristic $p$ 
and let $M$
be an overconvergent $\mathbf{F}$-isocrystal over $C$. After replacing $C$ with a 
dense
open subset, $M$ obtains a slope filtration. This is 
a purely $p$-adic phenomenon; there is no counterpart in the theory
of lisse $\ell$-adic sheaves. The graded pieces of this slope
filtration correspond to lisse $p$-adic sheaves, which we call geometric. Geometric 
lisse $p$-adic sheaves are mysterious, as there is no $\ell$-adic analogue. In this 
article 
we study the monodromy of geometric lisse $p$-adic sheaves with rank one. More 
precisely, we prove
exponential bounds on their ramification breaks. When the generic slopes of $M$ are 
integers,
we show that the local ramification breaks satisfy a certain type of periodicity. 
The crux of the proof is
the theory of $\mathbf{F}$-isocrystals with log-decay. We prove a 
monodromy theorem for these $\mathbf{F}$-isocrystals, as well as a theorem
relating the slopes of $M$ to the rate of log-decay of
the slope filtration. 
As a consequence of these methods, we provide a new proof
of the Drinfeld-Kedlaya theorem for irreducible $\mathbf{F}$-isocrystals
on curves.
\end{abstract}

\section{Introduction}
\subsection{Motivation} \label{subsection:motivation}
Let $C$ be a smooth curve over a finite field $\mathfrak{K}=\mathbb{F}_q$ in 
characteristic $p$. 
Classically, the study of motives
over $C$ has focused on lisse $\ell$-adic \'etale sheaves on $C$, where $\ell \neq 
p$. It is 
natural to ask for a $p$-adic
counterpart to the $\ell$-adic theory. However, there are far too many lisse $p$-adic 
\'etale sheaves 
and they
tend to be poorly behaved compared to their $\ell$-adic counterparts. 
For example, if we have a family of ordinary elliptic curves $f:E \to C$, 
the relative first degree $p$-adic \'etale cohomology $R^1_{et} f_* \Q_p$ 
has rank one. In contrast, the relative $\ell$-adic cohomology sheaf 
$R^1_{et}  f_* \Q_\ell$ has rank two, as is expected. Instead, the
correct $p$-adic coefficient objects are overconvergent $\mathbf{F}$-isocrystals,
which were first introduced by Pierre Berthelot (see
\cite{Berthelot1}). 

Overconvergent $\mathbf{F}$-isocrystals have a remarkable extra structure that is 
absent 
in the
$\ell$-adic theory: a \emph{slope filtration}.  Without giving any 
definitions, consider the overconvergent $\mathbf{F}$-isocrystal $M$ that acts
as the $p$-adic counterpart to the lisse sheaf $R^1_{et} f_* \Q_\ell$. The properties
of $M$ follow those of $R^1_{et}  f_* \Q_\ell$. First, $M$
has rank two. Just as in the $\ell$-adic case, for any $x \in C$ we may consider the 
fiber $M_x$ 
and
the action of Frobenius on $M_x$. The characteristic polynomial of this action will 
describe the 
zeta
function of the elliptic curve $E_x$:
\begin{align*}
Z(E_x,s) &= \frac{\det (1 - \text{Frob}^*s, M_x)}{(1-s)(1-q^{deg(x)}s)}.
\end{align*}
Here, we see a fundamental difference between the $\ell$-adic and $p$-adic 
situations. The 
roots
of the numerator of $Z(E_x,s)$ are both $\ell$-adic units. However, since
$E_x$ is ordinary, one root is a $p$-adic unit
and the other root has $q$-adic valuation one. Even before the modern definition of 
an 
$\mathbf{F}$-isocrystal
was in place, Dwork discovered something miraculous with no $\ell$-adic analogue: 
these unit 
roots
come from a rank one subobject $M^{unit}$ of $M$ existing in a larger category of 
convergent
$\mathbf{F}$-isocrystals. It was later demonstrated by Katz in 
\cite{Katz-p-adic_properties} that any ``unit-root'' $\mathbf{F}$-isocrystal 
corresponds to a $p$-adic \'etale sheaf. 
As one may expect, the $p$-adic \'etale sheaf corresponding
to $M^{unit}$ is $R^1_{et}  f_* \Q_p$.

This phenomenon generalizes. Let $N$ be an overconvergent 
$\mathbf{F}$-isocrystal
on $C$ and assume that the Newton polygon of $\det(1 - \text{Frob}^* s, N_x)$ remains
constant as we vary $x \in C$. Katz proves in \cite{katz-slope_filtration} that $N$ 
obtains 
an increasing filtration
in the larger category of convergent $\mathbf{F}$-isocrystals. The graded pieces of 
this filtration are ``twists''
of unit-root $\mathbf{F}$-isocrystals and thus correspond to lisse $p$-adic \'etale 
sheaves on $C$.
We say that a lisse $p$-adic \'etale sheaf is \emph{geometric} if it arises in 
this manner. Geometric $p$-adic \'etale sheaves remain mysterious. When one 
studies properties of overconvergent $\mathbf{F}$-isocrystals,
such as their cohomology or Frobenius distributions, the $\ell$-adic theory often 
serves as a guiding light
suggesting what is true and occasionally how it should be proven.
However, as there is no $\ell$-adic analogue to the slope filtration, it is less 
clear how
to proceed in developing a coherent theory. It is natural to ask if all geometric 
$p$-adic 
\'etale sheaves
share certain properties. Or, more ambitiously, is it possible to determine when a 
$p$-adic 
\'etale sheaf
is geometric? 

In this article we study the monodromy of geometric $p$-adic \'etale sheaves of 
rank one and the ``growth'' properties
of the slope filtration. In the case where the $\mathbf{F}$-isocrystal has 
integral slopes, we prove a monodromy stability result for geometric $p$-adic 
\'etale sheaves. This result says that the ramification breaks satisfy a 
certain type of periodicity. We are naturally led to consider 
$\mathbf{F}$-isocrystals with logarithmic decay and we prove a 
monodromy theorem for these $\mathbf{F}$-isocrystals. We 
also establish a relationship between the Frobenius slopes and the rate of 
logarithmic decay of the slope filtration. This allows us to give a new proof of 
the Drinfeld-Kedlaya theorem.

\subsection{Monodromy results} \label{intro:monodromy results section}

\subsubsection{Local results}\label{intro: local results}

Let $F$ be either $\mathfrak{K}((T))$ or a finite extension of $\Q_p$ and let $G_F$ 
be the 
absolute Galois group of $F$.
We let $L$ be a finite extension of $\Q_p$ with ring of
integers $\mathcal{O}_L$. For $k\geq 0$, we set $O_k^\times = \{x \in 
\mathcal{O}_L^\times 
~~|~~v_p(1-x)> k\}$.
Consider a continuous character $\rho:G_F \to \mathcal{O}_L^\times$.
We define $s_k(\rho)$ to be the largest upper numbering 
ramification
break of the Galois extension of $F$ corresponding to $\rho^{-1}(O_k^\times)$. 
When $F$ has characteristic $0$,
a celebrated result of Sen (see \cite{Sen}) tells us that there exists a positive rational number $c$ such 
that $ke -c \leq s_k(\rho) \leq ke + c$, where $e$ is the ramification index of 
$F$ over $\Q_p$.  Sen's theorem fails dismally in equal characteristic, since 
$s_k(\rho)$ may
grow arbitrarily fast with respect to $k$. In this article we study the growth of 
$s_k(\rho)$ when $F=\mathfrak{K}((T))$ and $\rho$ has geometric origin. We show that 
$s_k(\rho)$ grows exponentially
and under some additional geometric assumptions, we show that $s_k(\rho)$
satisfies a certain periodicity.

\begin{definition}
	We say that $\rho$ has \emph{finite monodromy} if the image
	of the inertia subgroup of $G_F$ is finite. 
	For $r> 0$, 
	we say that $\rho$ has $r$-\emph{bounded monodromy} if there exists
	a positive rational number $c$ such that
	\begin{align*}
	s_k(\rho)&<cp^{rk},
	\end{align*}
	for all $k>0$ (note that when $\rho$ has finite monodromy there exists
	$c>0$ such that $s_k(\rho)<c$ for all $k$, and thus $\rho$ has $r$-bounded monodromy
	for all $r>0$). 
	Let $a\in \Z_{\geq 1}$ and let $s=v_p(q^{a})$, where $v_p$ denotes
	the $p$-adic valuation normalized so that $v_p(p)=1$.
	We say that $\rho$
	has $a$-\emph{stable monodromy} if for every $k \in [0,s]$, 
	there exists $m_k$ and $b_k$ such that
	\begin{align*}
	s_{k+sn}(\rho)=m_kq^{an} + b_k
	\end{align*}
	for $n\gg0$.
	We say that $\rho$ has \emph{stable monodromy} if it has $a$-stable monodromy
	for some $a$.
\end{definition}

We now restrict ourselves to the case where $F=\mathfrak{K}((T))$. 
Let us explain what it means for a character of $G_F$
to be geometric. 
Let $M$ be an overconvergent $\mathbf{F}$-isocrystal over $\Spec(F)$ with 
coefficients in $L$
and let $\iota^{\dagger}(M)$ be the corresponding convergent 
$\mathbf{F}$-isocrystal (see \S \ref{subsection: fisocrystal section}). Then 
$\iota^\dagger(M)$
has a Frobenius slope filtration (see \S \ref{subsection: slope filtrations}):
\begin{align*}
0=M_0\subset M_1 \subset \dots \subset M_d = \iota^\dagger(M),
\end{align*}
where $gr_i(M)=M_i/M_{i-1}$ is isoclinic of slope $\alpha_i$. After
enlarging $L$, we may associate to $\det(M_i)$
a character $\rho_i:G_F \to \mathcal{O}_L^\times$. This character
is well-defined up to twist by an unramified character. 
We say that a character of $G_F$ is \emph{geometric} if it arises this way.

\begin{theorem} \label{intro: main local result2}
	Let $r_i=\frac{1}{\alpha_{i+1}-\alpha_i}$.
	\begin{enumerate}
		\item Then $\rho_i$ has $r_i$-bounded monodromy.
		\item Assume that $M$ is irreducible and that the slopes of $M$ are 
		integers. Then $\rho_i$ has stable monodromy. 
	\end{enumerate}
\end{theorem}

\noindent From Theorem \ref{intro: main local result2}, we see that rank one 
geometric $p$-adic
\'etale sheaves are intricate and fascinating objects. 
This is in stark contrast with the $\ell$-adic
situation, where rank one objects have finite monodromy and are easily 
understood. The stable monodromy 
of $\rho_i$ when $M$ has integral slopes is particularly surprising. Indeed, Theorem 
\ref{intro: main local 
	result2} shows that
the ramification filtration of $\rho_i$ is completely
determined by the first few ramification breaks. This is in contrast
to a general $p$-adic character of $\rho$, where there are essentially no 
restrictions on $s_n(\rho)$.

\begin{example}
	\label{example: geometric}
	Let $f:X \to \Spec(F)$ be a smooth proper morphism and assume
	that $R^i_{et} f_* \Q_p$ is a rank one $p$-adic lisse sheaf. 
	The corresponding character $\rho$ is geometric (see \cite{Kedlaya-thesis}). In 
	particular, 
	Theorem \ref{intro: main local result2} applies
	to many unit-root $\mathbf{F}$-isocrystals studied by Dwork and 
	others:
	\begin{itemize}
		\item Let $M$ be the $\mathbf{F}$-isocrystal associated to an elliptic 
		curve
		$E$ over $\Spec(\mathfrak{K}[[T]])$, whose generic fiber is ordinary and 
		whose 
		special fiber is supersingular. By Theorem \ref{intro: main local 
			result2}, the $p$-adic Tate module of the generic fiber of $E$ has stable 
		monodromy. 
		This was previously
		know by work of Katz-Mazur (see \cite[Chapter 12.9]{Katz-Mazur}). These
		types of ramification bounds for Abelian varieties play a crucial role
		in the theory of $p$-adic modular forms and canonical subgroups.
		\item Let $A \to \Spec(\mathfrak{K}[[T]])$ be a generically ordinary Abelian 
		variety 
		of dimension $2g$ 
		with a non-ordinary special fiber.
		Assume that $A$ has
		multiplication by a real field $K$ of degree $g$ over $\Q$. Then the 
		$\mathbf{F}$-isocrystal
		$M$ associated to $A$ with coefficients in $\Q_p$ has rank $2g$ and has a 
		linear action by $\Q_p\otimes K$. If there is only one prime in $K$ above 
		$p$, so that $\Q_p \otimes K$ is
		a field, we may regard $M$ as an $\mathbf{F}$-isocrystal with coefficients in
		$\Q_p \otimes K$ of rank two. The unit-root subcrystal of the generic fiber 
		of $M$ is 
		a rank 
		one $\mathbf{F}$-isocrystal 
		with 
		coefficients
		in $\Q_p \otimes K$. The corresponding Galois representative is
		surjective by 
		\cite{Ribet-image_of_pi1_hilbert_modular_varieties} and by Theorem 
		\ref{intro: main local result2} it has
		stable monodromy. 
		\item The rank $n+1$ Kloosterman $\mathbf{F}$-isocrystal on 
		$\mathbb{G}_m$ is 
		irreducible
		and ordinary at every point with slopes $\{0,\dots,n\}$, due to work
		of Sperber in \cite{Sperber1}. The unit-root subcrystal has
		stable monodromy at $0$ and $\infty$ by Theorem \ref{intro: 
			main local result2}.
	\end{itemize}
\end{example}

\subsubsection{Global results}

By applying the Riemann-Hurwitz formula together with Theorem
\ref{intro: main local result2},
we may deduce an interesting result about genera growth
along towers of curves. 
Let $U$ be a smooth curve over $\mathfrak{K}$ and let $C$ be its smooth
compactification. Let $\rho:\pi_1^{et}(U) \to \Z_p^\times$
be a continuous representation. For any $k\geq 1$, we let $g_k(\rho)$
denote the genus of the compact curve corresponding to $\rho^{-1}(1+p^k\Z_p) 
\subset \pi_1^{et}(U)$.

\begin{definition}
	Let $a \in \Z_{\geq 1}$ and let $s=v_p(q^a)$. 
	We say that $\rho$ is has $a$-stable genus growth if for every 
	$k \in [0,s]$, there exists a polynomial $b_k(x) \in \Q[x]$ of
	degree $2s$ such that
	\begin{align*}
	g_{k+an}(\rho) &= b_k(q^n),
	\end{align*}
	for $n \gg 0$. We say that $\rho$
	has psuedo-stable genus growth if $\rho$ has $a$-stable genus growth
	for some $a$. 
\end{definition}

\noindent Let $M$ be an overconvergent $\mathbf{F}$-isocrystal 
on $U$  with coefficients in $\Q_p$.
After replacing $U$ with an open dense subset, there exists a slope filtration:
\begin{align*}
0=M_0\subset M_1\subset \dots \subset M_d = \iota^\dagger(M),
\end{align*}
where $gr_i(M)=M_i/M_{i-1}$ is isoclinic of slope $\alpha_i$. As in \S \ref{intro: 
local results}, we may 
associate a character $\rho_i:\pi_1(U) \to \Z_p^\times$ to $\det(M_i)$ (see \S 
\ref{paragraph:galois reps for isoclinic}). 
\begin{theorem} \label{Genus growth with Qp coefficients theorem}
	Assume $M$ is irreducible and has integral slopes. Then for each $i<d$, the 
	representation $\rho_i$ has 
	psuedo-stable genus growth.
\end{theorem}
Let $f:X \to U$ be a smooth proper morphism and let $M$ be the 
overconvergent $\mathbf{F}$-isocrystal $R^i_{cris} f_* \mathcal{O}_{X,cris}$ (see \S \ref{section: berthelot's conjecture} for an explanation of why this is overconvergent). After 
shrinking $U$ we may assume that $M$ has a slope filtration. Thus $\det(R^i_{et} f_* 
\Q_p )$ corresponds to
the character $\rho_i$. Assume that $M$ is irreducible
and has integral slopes. Theorem
\ref{Genus growth with Qp coefficients theorem} implies that
$\rho_i$ has psuedo-stable genus 
growth. This proves a weaker version of a conjecture of Wan, which states that there 
exists
a quadratic $b(x) \in \Q[x]$ such that $s_k(\rho)=b(p^k)$ for large $k$ (see 
\cite[Conjecture 5.2]{Wan-p-rank}).

\subsection{Logarithmic decay and slope filtrations}
\label{intro: log decay slope filtration subsection}
We now give an informal overview of our results on $\mathbf{F}$-isocrystals 
with
logarithmic decay. For simplicity, we restrict ourselves to $\Q_p$-coefficients 
here. See \S \ref{Section: main results} for more general 
statements.
Let $E=\Frac{W(\mathfrak{K})}$, where $W(\mathfrak{K})$ is the $p$-typical Witt 
vectors of $\mathfrak{K}$. We define the integral Amice ring
\[  \mathcal{O}_{\mathcal{E}} := \Bigg\{ \sum_{n=-\infty}^\infty a_nT^n \Bigg |  
\begin{array}  {l}
\text{ We have } a_n\in W(\mathfrak{K}) \text{ and } ~\lim\limits_{n\to-\infty} 
v_p(a_n)=\infty.   
\end{array}
\Bigg \}.
\]
We let $\mathcal{E}$ be the field of fractions of $\mathcal{O}_\mathcal{E}$ and  let 
$\mathcal{E}^\dagger$ be the subring 
of $\mathcal{E}$ consisting of Laurent series convergent on some annulus $r < 
|T|_p < 1$.
Let $\sigma:\mathcal{O}_\mathcal{E} \to \mathcal{O}_\mathcal{E}$ 
act on $E$ as the $p$-Frobenius map and send $T \mapsto T^p$.
A convergent $\mathbf{F}$-isocrystal over $\Spec(F)$ is
a finite dimensional vector space over $\mathcal{E}$
with an isomorphism $\varphi: M \otimes_{\sigma}\mathcal{E} \to M$
and a compatible differential equation (see \S \ref{section: Local F-isocrystals 
	and their slope filtrations}). An overconvergent $\mathbf{F}$-isocrystal is a 
	finite 
dimensional 
vector space over $\mathcal{E}^\dagger$ with the same extra structure.

Given an overconvergent $\mathbf{F}$-isocrystal $M$,
the convergent $\mathbf{F}$-isocrystal 
$M\otimes_{\mathcal{E}^\dagger} \mathcal{E}$ has a slope filtration. In general, the 
steps $M_i$ of the slope filtration
will not be overconvergent. However, it turns out that there
are intermediate ``logarithmic decay'' rings between $\mathcal{E}^\dagger$ and
$\mathcal{E}$, over which $M_i$ are defined. This builds on an idea of Dwork-Sperber 
and 
utilized by 
Wan (see
\cite{Dwork-Sperber} and \cite{WanDworksconjecture}), where
they consider Frobenius structures with logarithmic decay. To define the
$r$-log-decay ring, we
need to introduce naive partial valuations on $\mathcal{E}$.
For any $a(T)=\sum a_nT^n \in \mathcal{E}$ we define
\[w_k(a(T)) = \min_{v_p(a_n)\leq k} \{n\}. \]
That is, $w_k(a(T))$ is the $T$-adic valuation of $a(T)$ reduced modulo 
$p^{k+1}$. We define $\mathcal{E}^r$ to be
the subring of $\mathcal{E}$ consisting of $a(T)$
such that for some $c>0$, we have $w_k(a(T))\geq -cp^{rk} $
for $k$ large.
Roughly, a convergent $\mathbf{F}$-isocrystal has $r$-log-decay
if the Frobenius and differential equation descend to $\mathcal{E}^r$ (the actual 
definition we use is a bit more 
subtle, see \S \ref{paragraph: the log decay cond}). The following theorem states 
that the rate of 
logarithmic decay is closely related to
differences between consecutive slopes. We further conjecture 
that if $M$ is irreducible, then the slopes are entirely determined by
the rate of log-decay (see Conjecture \ref{log-decay and slope filtrations}).
\begin{theorem} \label{intro: log decay slope fil}
	Let $r=\frac{1}{\alpha_{i+1}-\alpha_i}$. Then
	$M_i$ has $r$-log-decay.
\end{theorem}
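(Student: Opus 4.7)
The plan is to construct $M_i$ via a Frobenius-linear fixed-point equation whose rate of convergence is governed by the slope gap $\alpha_{i+1}-\alpha_i$. I start with a basis of $M$ over $\mathcal{E}^\dagger$ in which the Frobenius matrix is $\Phi$. Using Katz's slope filtration over $\mathcal{E}$ together with a preliminary change of basis, arranged over $\mathcal{E}^\dagger$ by truncating the $\mathcal{E}$-valued filtration $p$-adically and lifting, I put $\Phi$ in block form
\[\Phi = \begin{pmatrix} A & B \\ C & D \end{pmatrix}\]
with $A$ (resp.\ $D$) matching an approximate slope-$\leq \alpha_i$ (resp.\ slope-$\geq \alpha_{i+1}$) quotient. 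Pinning down the true $M_i$ then amounts to solving for $X$ such that the conjugation of $\Phi$ by $\begin{pmatrix} I & 0\\ X & I\end{pmatrix}$ becomes block-upper-triangular, i.e.\
\[X A + X B X^\sigma = C + D X^\sigma.\]
Producing such an $X$ with entries in $\mathcal{E}^r$, where $r=1/(\alpha_{i+1}-\alpha_i)$, realizes $M_i$ over $\mathcal{E}^r$ and so delivers the theorem.

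The key step is to solve the equation by iteration and estimate $\vnaive{X}{k}$. Dropping the quadratic term, the linearization $X = CA^{-1} + D X^\sigma A^{-1}$ expands as
\[X = \sum_{n=0}^\infty D\, D^\sigma \cdots D^{\sigma^{n-1}}\,(CA^{-1})^{\sigma^n}\,A^{-\sigma^{n-1}} \cdots A^{-\sigma} A^{-1}.\]
The slope constraints force the $n$-th summand to have $p$-adic valuation at least $n(\alpha_{i+1}-\alpha_i)-O(1)$, so only terms with $n \leq rk + O(1)$ contribute modulo $p^{k+1}$. Each such summand is a $\sigma^n$-iterate of an overconvergent function, hence its reduction modulo $p^{k+1}$ has $T$-pole order bounded by $c p^n$ for a uniform $c$. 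Taking the maximum over $n \leq rk + O(1)$ yields $\vnaive{X}{k} \geq -c' p^{rk}$, which is precisely the defining bound for $\mathcal{E}^r$. The nonlinear correction $X B X^\sigma$ is absorbed by a Newton-type successive-approximation scheme that does not worsen the estimate, and the connection remains in $\mathcal{E}^r$ because $d/dT$ shifts naive partial valuations only by $-1$.

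The main obstacle will be translating the abstract slope data, which is a Newton-polygon statement about isoclinic blocks over $\mathcal{E}$, into honest coefficient-wise $p$-adic bounds on the entries of $A$, $D$, $A^{-1}$, and their $\sigma$-iterates that power the series. This requires choosing the preliminary basis carefully, lifting an $\mathcal{E}$-valued basis realizing the slope filtration into $\mathcal{E}^\dagger$ to sufficient $p$-adic precision so that the error $C$ which the fixed-point equation must kill is uniformly small $p$-adically across all $T$-adic scales. A secondary point is verifying that the $\sigma^n$-iterates of overconvergent series have $T$-pole order growing no faster than a constant times $p^n$; this follows from the explicit action of $\sigma$ on the annulus of overconvergence, but must be tracked uniformly through the iteration. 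Once these estimates are in place, the passage from the linear iteration to the full nonlinear equation and the verification of the connection bound are essentially formal.
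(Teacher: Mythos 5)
Your proposal follows essentially the same route as the paper's proof of Theorem \ref{diffences of slopes give log-decay}: after a finite base change one approximates the Frobenius matrix by a diagonal of $\pi$-powers (Lemma \ref{extension and basis}) and then block-triangularizes over the $r$-log-decay ring by solving the recursive Frobenius equation for the lower-left block --- your explicit series is just the unrolled form of the successive skew-conjugations in Lemma \ref{frobenius base change: approximating the slope filtration} --- with the rate $r=\frac{1}{\alpha_{i+1}-\alpha_i}$ arising from the same trade-off between $p$-adic valuation gain and Frobenius-induced pole growth. The uniform constant-tracking you flag as the main obstacle, and the stability of the resulting subobject under $\nabla$ (which the paper establishes via the Frobenius--connection compatibility relation together with uniqueness of the slope filtration, rather than only noting that $\frac{d}{dT}$ preserves $\mathcal{E}^r$), are precisely the points supplied by the paper's $\mathcal{O}_{\mathcal{E}}^{r,c}$ formalism and its connection argument, so your outline is correct and matches the paper's approach.
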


\noindent We also study the monodromy of rank one $\mathbf{F}$-isocrystals with 
logarithmic 
decay.
\begin{proposition} \label{intro: no log-decay for r<1}
	Let $N$ be a rank one convergent $\mathbf{F}$-isocrystal with $r$-log-decay. 
	Let $\rho: G_F \to \Z_p^\times$ be the corresponding character (well-defined 
	up to unramified twist). Then $\rho$ has $r$-bounded monodromy.
\end{proposition}
\noindent 	
The first part of Theorem \ref{intro: main local result2} follows from Theorem 
\ref{intro: log decay slope fil} and Proposition \ref{intro: no log-decay for 
	r<1}. There is another somewhat surprising consequence of Theorem \ref{intro: log 
	decay 
	slope fil} and Proposition \ref{intro: no log-decay for r<1}: a
proof of the Drinfeld-Kedlaya
theorem for irreducible $\mathbf{F}$-isocrystals on curves. This result first 
appears in
(see \cite{Drinfeld-Kedlaya} and \cite[Appendix A]{Kedlaya-notes_on_isocrystals}), 
though a 
local version appeared in Kedlaya's thesis (see \cite{Kedlaya-thesis}). See
Remark \ref{remark: drinfeld-kedlaya} for a comparison of
our approach to the work of Kedlaya.
\begin{corollary} (Drinfeld-Kedlaya) \label{drinfeld-kedlaya}
	Let $M$ be an irreducible $\mathbf{F}$-isocrystal on a smooth curve $U$
	and let $\alpha_1<\dots<\alpha_d$ be the generic slopes of $M$.
	Then $|\alpha_{i+1}-\alpha_i|\leq 1$.
\end{corollary}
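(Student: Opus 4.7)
The approach is proof by contradiction. Suppose some slope gap satisfies $\alpha_{i+1}-\alpha_i > 1$, and set $s := 1/(\alpha_{i+1}-\alpha_i) < 1$. First I would reduce to the local setting by pulling back to $\Spec(F)$ at a point of $U$, so that $M$ becomes an overconvergent $F$-isocrystal over $\mathcal{E}^\dagger$ whose convergent extension $M\otimes_{\mathcal{E}^\dagger}\mathcal{E}$ carries the slope filtration $M_1\subset\dots\subset M_r$.

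Theorem \ref{intro: log decay slope fil} then gives that $M_i$ is defined over $\mathcal{E}^s$, i.e.\ has $s$-log-decay. Since log-decay is preserved by tensor and wedge constructions---the Frobenius and connection on $\wedge^i M_i$ are built from those of $M_i$ by operations that respect the naive partial valuations defining $\mathcal{E}^s$---the rank-one convergent $F$-isocrystal $\det(M_i) = \wedge^i M_i$ also has $s$-log-decay with $s<1$. Proposition \ref{intro: no log-decay for r<1} then forces $\det(M_i)$ to be overconvergent.

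It remains to use this overconvergence to produce a proper overconvergent sub-$F$-isocrystal of $M$, contradicting the irreducibility of $M$. The rank-one object $\det(M_i)$ sits inside $\wedge^i M \otimes_{\mathcal{E}^\dagger}\mathcal{E}$ as the minimal-slope step of the slope filtration of the overconvergent $\wedge^i M$. The overconvergence of $\det(M_i)$ as an abstract $F$-isocrystal, combined with a Frobenius-equivariant descent applied to the internal Hom $\det(M_i)^\vee \otimes \wedge^i M$, should promote the inclusion $\det(M_i)\hookrightarrow \wedge^i M$ itself to be overconvergent. The resulting overconvergent line in $\wedge^i M$ is decomposable, being the Pl\"ucker line of $M_i$, and the Pl\"ucker inverse then yields an overconvergent subobject $M_i\subset M$ of rank $0<i<r$, contradicting the irreducibility of $M$.

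The principal obstacle is this last step: one must upgrade ``$\det(M_i)$ is overconvergent as an abstract $F$-isocrystal'' to ``$M_i$ is an overconvergent sub-$F$-isocrystal of $M$''. This is not automatic, since in general overconvergence of the determinant of a subspace does not imply overconvergence of the subspace itself. The argument must leverage both the rigidity of the slope filtration (to descend the specific inclusion, not merely the abstract isomorphism class) and the algebraicity of the Pl\"ucker embedding (to pass from a rank-one descent up to a rank-$i$ descent).
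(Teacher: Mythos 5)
Your local argument up to the overconvergence of $\det(M_i)$ is exactly the paper's: Theorem \ref{diffences of slopes give log-decay} gives $r$-log-decay with $r<1$, and Proposition \ref{nothing between overconvergent and $1$-log-decay} then forces $\det(M_i)$ to be overconvergent. But the two points you leave open are genuine gaps, and the paper closes them with specific inputs you do not supply. First, promoting ``$\det(M_i)$ is overconvergent as an abstract isocrystal'' to ``the inclusion $\det(M_i)\hookrightarrow \wedge^{rank(M_i)}M$ is overconvergent'' is not a formal Frobenius-equivariant descent on an internal Hom: it is precisely Kedlaya's full faithfulness theorem \cite{Kedlaya-fully_faithful} (the restriction functor from $\Mphi{\mathcal{E}^\dagger}$ to $\Mphi{\mathcal{E}}$ is fully faithful), applied to the convergent morphism between the two overconvergent objects $\det(M_i)$ and $\wedge^{rank(M_i)}M$; only after that does the Pl\"ucker step you describe (``$\det(M_i)$ is an overconvergent subobject of $\wedge^{rank(M_i)}M$ iff $M_i$ is an overconvergent subobject of $M$'') apply, which is how Corollary \ref{local Drinfeld-Kedlaya} concludes. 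As written, your ``should promote the inclusion'' step has no proof, and you yourself flag it as unresolved.

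Second, and more seriously, your contradiction is local while the irreducibility hypothesis is global: irreducibility of $M$ on $U$ does not imply irreducibility of $M|_{\Spec(F_x)}$, so an overconvergent subobject of the local restriction contradicts nothing. Moreover, the slope filtration you invoke only exists over the locus where the Newton polygon is constant, whereas the hypothesis concerns only the generic slopes. The paper's proof of Corollary \ref{global Drinfeld-Kedlaya} spends most of its effort exactly here: after reducing by exterior powers and a twist so that the first two generic slopes are $0$ and $\eta>1$, it applies the local Corollary \ref{local Drinfeld-Kedlaya} at every closed point to show the unit-root part extends over $\Spec(\mathcal{O}_{F'})$ for a totally ramified extension $F'$, hence the vertex $(1,0)$ lies on the Newton polygon at every point; this constancy lets Katz's theorem \cite{katz-slope_filtration} produce a global convergent unit-root subcrystal on all of $U$, whose overconvergence is then verified at the points at infinity (local corollary again), and only then does Kedlaya's full faithfulness yield a global overconvergent subobject of $M$ on $U$, contradicting irreducibility. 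Your proposal omits this entire globalization step.
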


\begin{remark}
	It is possible to prove the Drinfeld-Kedlaya theorem using logarithmic decay 
	without studying representations with infinite monodromy. This is the content 
	of work of the author (see \cite{JKM-higher_dim}), where 
	the Drinfeld-Kedlaya theorem follows from studying connections with 
	logarithmic decay. It was discovered somewhat accidentally that one could 
	deduce the Drinfeld-Kedlaya theorem by studying ramification filtrations, and 
	we view it as a fortunate consequence of the main results of this paper. 
\end{remark}

\subsection{The question of more general ground fields}
It would be interesting to see if Theorem \ref{intro: main local result2}
or Theorem \ref{Genus growth with Qp coefficients theorem} still hold if
we only require that $\mathfrak{K}$ have characteristic $p$. 
In this case, defining the ramificaiton filtration is more nuanced (see work
of Abbes-Saito \cite{abbes-saito-ramification_theory}). However,
in the case of $\mathbf{F}$-isocrystals with finite monodromy,
the differential structure determines the ramification invariants
through a differential Swan conductor defined by Kedlaya (see \cite{kedlaya2007swan}).
This is due to work of Chiarellotto-Pulita and Xiao (see 
\cite{pulita-chiarellotto-differen_swan_conductors} and 
\cite{xiao-ramification_equal_char}). This suggests that
monodromy stability may hold for more general $\mathfrak{K}$.

\subsection{Outline}
In \S \ref{section: some rings} we introduce several rings that will be used 
throughout
the article. In \S \ref{section: Local F-isocrystals and their slope filtrations} we
give an overview of $\mathbf{F}$-isocrystals and introduce the notion of logarithmic 
decay.
Next, in \S \ref{section: ramification} we discuss ramification theory for $p$-adic 
characters
and in \S \ref{section: Monodromy section} we prove a monodromy theorem for rank one
$\mathbf{F}$-isocrystals. We prove several results on recursive Frobenius equations 
in \S \ref{section: recursive frobenius} and we study the growth of the slope 
filtration in \S \ref{section: solving the unit-root subspace}. Finally, in \S 
\ref{Section: main results} we combine the results of \S \ref{section: Monodromy 
section}-\ref{section: solving the unit-root subspace} to prove our main results. 

\subsection{Acknowledgments}

We thank Daqing Wan for his encouragement and enthusiasm for 
this work, as well as for several insightful discussions. It should
be noted that his conjecture was a major motivation for us pursuing this work. We 
also 
acknowledge Kiran Kedlaya, with whom
we had several discussions about the Drinfeld-Kedlaya theorem and slope filtrations.
We have had several useful discussions with Raju Krishnamoorthy.
Finally, we would like to thank Liang Xiao for some helpful comments on an earlier 
version
of this manuscript and we would like to thank an anonymous referee for many helpful 
suggestions.

\section{Conventions}
\label{section: conventions}
Let $R$ be any ring. 
If $R$ has a valuation $v:R \to \mathbb{R}$ and $x \in R$ satisfies
$v(x)>0$, we let $v_x(\cdot)$ denote the normalization
of $v$ satisfying $v_x(x)=1$. Let $\mathcal{O}_R \subset R$
be the subring of elements with $v(x)\geq 0$. For a matrix $A \in M_{n\times m}(R)$,
we let $v(A)$ denote the infimum of the valuations of its entries.  
For $f \in \mathcal{O}_R$,
we let $\overline{f}$ denote the image of $f$ in the residue field.
For any field $F$ we let $G_{F}$ denote the absolute Galois group of $F$.
If $L$ is a Galois extension of $F$ we let $G_{L/F}$ denote the Galois group of
$L$ over $F$. We will assume all characters/representations of $G_F$ are continuous.

The following conventions will be used throughout the article. Let $p$ be prime
and let $q=p^f$.
We will always take $L$ to be a finite extension of $\Q_p$ with residue field 
$\mathbb{F}_q$. Let $e$ be the ramification index of $L$ over $\Q_p$ and let $\pi$ be 
a uniformizing 
element of $L$. We let $\Theta$ denote the set of 
embeddings of $L$ into $\Q_p^{alg}$. Let 
$\mathfrak{K}$ be a perfect field containing $\mathbb{F}_{q}$
and let $E$ be
$\Frac{\mathcal{O}_L\otimes_{W(\mathbb{F}_{q})}W(\mathfrak{K})}$, where $W(R)$ denotes
the ring of $p$-typical Witt vectors of $R$. Then $E$ is totally ramified over
$\Frac{W(\mathfrak{K})}$. Let $\nu$ denote the $p$-Frobenius 
endomorphism on $\Frac{W(\mathfrak{K})}$ and let $\sigma$ denote the endomorphism $1 
\otimes 
\nu^f$ on $E$. 

\section{Some rings}
\label{section: some rings}
\subsection{The Amice ring and the bounded Robba ring}
\label{subsection: The Amice ring and its subrings}
Let $F=\mathfrak{K}((T))$ and let $F^{un}= \mathfrak{K}^{alg}((T))$. We define
the following $E$-algebras:
\[  \mathcal{E}_{F,L} := \Bigg\{ \sum_{n=-\infty}^\infty a_nT^n \Bigg |  
\begin{array}  {l}
\text{ We have } a_n\in E, ~\lim\limits_{n\to-\infty} v_p(a_n)=\infty,   \\
\text{ and the}	~	v_p(a_n) \text{ is bounded below.} 
\end{array}
\Bigg \},  
\]  
\[
\mathcal{E}_{F,L}^\dagger := \Bigg\{ \sum_{n=-\infty}^\infty a_nT^n  \in 
\mathcal{E}_{F,L} \Big |  
\begin{array}  {l}
\text{ There exists $m>0$ such that} \\
v_p(a_n) \geq -mn \text{ for $n\ll 0$} 
\end{array}
\Bigg \}.  
\]
If there is no ambiguity, we will omit the $F$ and the $L$.
Note that $\mathcal{E}^\dagger$ and 
$\mathcal{E}$ are local fields with residue field
$F$. The valuation $v_p$ on $L$ extends to
the Gauss valuation on each of these fields. 
We also define
\[	\mathcal{E}_\infty = \mathcal{E} \hat{\otimes}_{W(\mathfrak{K})} 
W(\mathfrak{K}^{alg}), ~~ \text{ and } ~~\mathcal{E}_\infty^\dagger = 
\mathcal{E}_\infty \cap 
\mathcal{E}_{F^{un},L}^\dagger,\]
where $\hat{\otimes}$ denotes the completed tensor product.

\subsection{Logarithmic decay rings}
\label{subsection: logarithmic decay rings}
Let $k \in \frac{1}{e}\Z$. We define the partial valuation $w_k:\mathcal{E}_{F^{un},L} \to \Z\cup \infty$
as follows: for $x = \sum a_nT^n$ we have
\[ w_k(x) = \min_{v_p(a_n)\leq k} \{n\}.\]
Informally, $w_k(x)$ is the smallest power of $T$ occurring in $x$ reduced modulo 
$\pi^{ke+1}$.
These partial valuations satisfy the following inequalities:
\begin{align} \label{Colmez inequalities}
\begin{split}
w_k(x+y) & \geq  \min(w_k(x),w_k(y)), \\
w_k(xy) & \geq  \min_{i+j\leq k} (w_i(x) + w_j(y)).
\end{split}
\end{align}
In either inequality, there is an equality if the minimum is attained exactly once.
For $r > 0$ we define
\begin{align*}
\mathcal{E}^r_\infty &= \Big\{ x  \in 
\mathcal{E}_\infty \Big |  
\begin{array}  {l}
\text{there exists $c>0$ such that} \\
w_k(x) \geq -cp^{kr} \text{ for $k\gg 0$} 
\end{array}
\Big \}, \\
\mathcal{E}^r &= 	\mathcal{E}^r_\infty \cap \mathcal{E}.
\end{align*}
By \eqref{Colmez inequalities} both $\mathcal{E}^r_\infty$ and $\mathcal{E}^r$ are 
rings.
Let $s=\frac{1}{r}$ and let $P\subset \mathbb{R}^2$ denote the lower convex hull of 
the points 
\[(0,0), (s,p), (2s,p^2), \dots .\]
Then $P$ is the graph of a continuous piece-wise linear function 
$f_r:\mathbb{R}_{\geq 
	0} \to \mathbb{R}_{\geq 0}$. For $c>0$ we define
\begin{align*}
\mathcal{O}_{\mathcal{E}_\infty}^{r,c} &= \Big \{ x \in 
\mathcal{O}_{\mathcal{E}_\infty}~\Big |~ 
w_k(x) \geq -cf_r(k),\text{ for }k\geq 0 \Big \} \\
\mathcal{O}_{\mathcal{E}}^{r,c}&= \mathcal{O}_{\mathcal{E}_\infty}^{r,c} \cap 
\mathcal{E}.
\end{align*}

\noindent As $f_r$ is super-additive (i.e. $f_r(x+y)\geq f_r(x)+f_r(y)$ for all $x,y\geq 0$) we know that
$\mathcal{O}_{\mathcal{E}}^{r,c}$ (resp. $\mathcal{O}_{\mathcal{E}_\infty}^{r,c}$)
is a $p$-adically closed subring of $\mathcal{O}_{\mathcal{E}}^r$ (resp. 
$\mathcal{O}_{\mathcal{E}_\infty}^r$). 

\begin{lemma} \label{lemma: inverting in r,c ring}
	Let $x \in \mathcal{O}_{\mathcal{E}_\infty}^{r,c}$ with $w_0(x) =0$. Then 
	$x^{-1} \in 
	\mathcal{O}_{\mathcal{E}_\infty}^{r,c}$. 
\end{lemma}
\begin{proof}
	This follows from \eqref{Colmez inequalities} and the super-additivatity of
	$f^r$.
\end{proof}

\begin{proposition} \label{proposition: E^r is a field}
	The rings $\mathcal{E}^r_\infty$ and $\mathcal{E}^r$ are fields.
\end{proposition}
\begin{proof} Let $x\in 
	\mathcal{E}^r$. After multiplying by a power of $p$ and a power of $T$ we may 
	assume that $x \in \mathcal{O}_{\mathcal{E}^r}$ and $w_0(x)= 0$. Then for
	$c$ sufficiently large we have $x \in \mathcal{O}_{\mathcal{E}}^{r,c}$. The 
	proposition follows from Lemma \ref{lemma: inverting in r,c ring}.
\end{proof}

\subsection{Auxillary spaces of power series} \label{subsection: aux rings}
We now introduce subrings of $\mathcal{E}_\infty$ that 
will be used throughout this article. First, extend $\sigma$ (resp. $\nu$) to an 
endomorphism of $\mathcal{E}_\infty$, $\mathcal{E}_\infty^\dagger$, and 
$\mathcal{E}_\infty^r$ (resp. 
$\mathcal{E}_{F,\Q_p}$, $\mathcal{E}_{F,\Q_p}^\dagger$, and $\mathcal{E}_{F,\Q_p}^r$)
sending $T \mapsto T^q$ (resp. $T\mapsto T^p$). We define 
\begin{align*}
\mathcal{E}_{\infty}^{(p^r)} = \Bigg \{ \sum a_nT^n \in 
\mathcal{E}_\infty ~~ \Bigg| ~~ a_n=0 ~~\text{for all }p^r\mid n \Bigg \}.
\end{align*}
Note that $\mathcal{E}_{\infty }^{(q)}$ is a $\sigma(\mathcal{E}_{\infty})$-module. 
Next, for any $m>0$ we define the $p$-adically closed ring
\begin{align*}
\mathcal{O}_{\mathcal{E}_{\infty}}^{\dagger,m} = \Bigg \{ x \in 
\mathcal{O}_{\mathcal{E}_\infty^\dagger} ~~ \Bigg|~~ w_k(x) \geq mk~~\text{for 
	all 
}k\geq 0) \Bigg 
\}.
\end{align*}
Finally, we define the ring
\begin{align*}
\mathcal{E}_{\infty}^{\dagger,-} = \Bigg \{ \sum a_nT^n \in 
\mathcal{E}_\infty^\dagger ~~ \Bigg| ~~ a_n=0 ~~\text{for all }n>0 \Bigg \}.
\end{align*}

\section{$\mathbf{F}$-isocrystals and their slope filtrations} 
\label{section: Local F-isocrystals and their slope filtrations}

\subsection{$(\sigma,\nabla)$-modules}

For this subsection we let $R$ be $\mathcal{E}_*, \mathcal{E}_*^r$, or 
$\mathcal{E}_*^\dagger$,
where $*$ is either $\infty$ or nothing.

\begin{definition}
	A $\sigma$-\emph{module} over $R$ is a finite dimensional vector space $M$ over 
	$R$
	equipped with a $\sigma$-semilinear endomorphism $\varphi: M \to M$
	whose linearization is an isomorphism.  That is,
	we have $\varphi(am)=\sigma(a)\varphi(m)$ for $a\in R$
	and $\sigma^* \varphi: R \otimes_{\sigma} M \to M$
	is an isomorphism.  Given a basis $\mathbf{e}=(e_1, \dots, e_n)$ of $M$,
	there exists a matrix $A \in M_{n\times n}(R)$ such that 
	$\varphi(\mathbf{e})=A\mathbf{e}$.
	This matrix is well-defined up to skew-conjugation. We refer to $A$
	as a \emph{Frobenius matrix} of $M$ or a \emph{Frobenius structure} of $M$.
\end{definition}
\begin{definition}
	Let $\Omega_R$ be the module of differentials of $R$ over $E$. 
	We define the $\delta_T: R \to \Omega_R$ to be the
	map $a(T) \mapsto \frac{da(T)}{dT} dT$.  A $\nabla$-module over $R$
	is a vector space $M$ over $R$ equipped with a connection.  That is,
	$M$ comes with an $E$-linear map $\nabla: M \to M\otimes_R \Omega_R$
	satisfying the Leibnitz rule.
\end{definition}

\begin{definition} By abuse of notation, we let
	$\sigma: \Omega_R \to \Omega_R$ be the map defined by 
	$\sigma(f(T)dT)=\sigma(f(T))d\sigma(T)$. 
	A $(\sigma,\nabla)$-module is a
	$\sigma$-module $M$ with a connection $\nabla$ such that:
	\begin{center}
		\begin{tikzcd}
			
			M \arrow{d}{\varphi} \arrow{r}{\nabla} &
			M \otimes \Omega_R \arrow{d}{\varphi \otimes \sigma} \\
			
			M \arrow{r}{\nabla} & M \otimes \Omega_R ,
		\end{tikzcd}
	\end{center}
	is a commutative diagram. We denote the category of $(\sigma,\nabla)$-modules 
	over $R$ by
	$\Mphiok{R}{\sigma}$
\end{definition}

\begin{remark}
	Let $\sigma_0:R \to R$ be another lift of the $q$-Frobenius morphism 
	that extends $\sigma|_E$. We may define the category $\Mphiok{R}{\sigma_0}$
	in an analogous way. One may show that the 
	categories $\Mphiok{R}{\sigma_0}$ and $\Mphiok{R}{\sigma}$ are equivalent 
	(see \cite[Proposition 3.4.2]{Tsuzuki2} for $R=\mathcal{E},\mathcal{E}^\dagger$ 
	and the case where $R=\mathcal{E}^r$ is similar). However, for the purposes of 
	this article it is enough to only consider $\sigma$. 
\end{remark}

\subsection{\textbf{F}-isocrystals} 
\label{subsection: fisocrystal section}
Let $X$ be $\Spec(\mathfrak{K})$, $\Spec(F)$, or a smooth geometrically connected 
variety over 
$\Spec(\mathfrak{K})$. We 
let 
$\Fisoc^\dagger(X)$ denote the category of
overconvergent $\mathbf{F}$-isocrystals on $X$ 
and we let $\Fisoc(X)$ denote the category of convergent $\mathbf{F}$-isocrystals
on $X$ (see \cite[\S 2]{Kedlaya-notes_on_isocrystals} for 
precise definitions). These categories are $\Q_q$-linear. We define $\Fisoc(X)\otimes 
L$ (resp. 
$\Fisoc^\dagger(X)\otimes L$)
to be the category whose objects are pairs $(M,f)$, where $M$ is an object of 
$\Fisoc(X)$ 
(resp. 
$\Fisoc^\dagger(X)$) and $f:L \to End(M)$ is a $\Q_q$-linear map.
For a finite extension $L'$ of $L$, there is a functor $\Fisoc(X)\otimes L \to 
\Fisoc(X)\otimes L'$ (resp. 
$\Fisoc^\dagger(X)\otimes L \to \Fisoc^\dagger(X)\otimes L'$).
We have the following equivalences of categories:
\begin{align*}
\Mphiok{\mathcal{E}_{F,L}}{\sigma}&\longleftrightarrow \Fisoc(\Spec(F))\otimes L, 
\\
\Mphiok{\mathcal{E}_{F,L}^\dagger}{\sigma}&\longleftrightarrow 
\Fisoc^\dagger(\Spec(F))\otimes L.
\end{align*}
%
%
%
There is a functor $\iota^\dagger:\Fisoc^\dagger(X)\otimes L \to 
\Fisoc(X)\otimes L$. In terms of $(\sigma,\nabla)$-modules, this functor
sends a $(\sigma,\nabla)$-module $M$ over $\mathcal{E}_{F,L}^\dagger$ to $M 
\otimes_{\mathcal{E}_{F,L}^\dagger} \mathcal{E}_{F,L}$. This functor is known to be 
fully faithful (see \cite{Kedlaya-fully_faithful}). 

\paragraph{Pullbacks}
Let $f:Y\to X$ be a smooth morphism. There are pullback
functors: 
\begin{align*}
f^*: \Fisoc(X)\otimes L &\to  \Fisoc(Y)\otimes L, \\
f^*: \Fisoc(X)^\dagger \otimes L &\to  \Fisoc(Y)^\dagger \otimes L.
\end{align*}
Consider the morphism $\eta: \Spec(F^{un}) \to \Spec(F)$. Then $\eta^*$
sends a $(\sigma,\nabla)$-module $M$ over
$\mathcal{E}_{F,L}$ (resp. $\mathcal{E}_{F,L}^\dagger$) to 
$M\otimes_{\mathcal{E}_{F,L}} \mathcal{E}_{F^{un},L}$  (resp. 
$M\otimes_{\mathcal{E}_{F,L}^\dagger } \mathcal{E}_{F^{un},L}^\dagger$).
In particular, the functor $\eta^*$ factors through the ``tensor by 
$\mathcal{E}_\infty$'' 
(resp. ``tensor by $\mathcal{E}_\infty^\dagger$'') functors.

\paragraph{The log-decay condition}
\label{paragraph: the log decay cond}
Let $K=\mathfrak{K}'((u))$ be a finite separable extension of $F$. An object $M$ of 
$\Fisoc(\Spec(K))\otimes L$ may be realizes as a $(\sigma_u,\nabla)$-module over
$\mathcal{E}_{K,L}$, where $\sigma_u$ sends $u$ to $u^q$. 

\begin{definition}
	Let $r>0$ and let $M$ be an object of $\Fisoc(\Spec(K))\otimes L$. We say that
	$M$ has $r$-log-decay for $u$ if there exists a  $(\sigma_u,\nabla)$-module
	$M^r$ over $\mathcal{E}_{K,L}^r$ such that $M^r \otimes_{\mathcal{E}_{K,L}^r} 
	\mathcal{E}_{K,L} \cong M$. We say that an object $M$ of $\Fisoc(\Spec(F))\otimes 
	L$ has 
	$r$-log-decay if for every finite separable morphism $f:\Spec(\mathfrak{K}'((u))) 
	\to 
	\Spec(F)$, the pullback $f^*M$ has $r$-log-decay for $u$. We say that
	$M$ has strict $r$-log-decay if $M$ has $r$-log-decay and does not have
	$s$-log-decay for any $s<r$.
\end{definition}

\begin{remark}
	Our definition of $r$-log-decay is slightly ad-hoc. One can prove than if 
	an object of $\Fisoc(\Spec(F))\otimes L$ has $r$-log-decay for $T$, then it has
	$r$-log-decay. This intrinsic 
	approach to $r$-log-decay will appear in future work of the author, but is not 
	necessary
	for this article.
\end{remark}

\subsubsection{Unit-root $\mathbf{F}$-isocrystals and $p$-adic representations}
\begin{definition} \label{definition: isoclinic and etale}
	Assume $k$ is algebraically closed. We say an object $M$ of 
	$\Fisoc(\Spec(k))\otimes L$ is \emph{\'etale} or \emph{unit-root} if all of its 
	slopes are zero when viewed as a Dieudonn\`e module (see e.g. \cite[\S 
	1.3]{katz-slope_filtration}). More generally, we say that an object $M$ of 
	$\Fisoc(X)\otimes L$ (resp. $\Fisoc^\dagger(X)\otimes L$) is unit-root 
	if for every geometric point $x$ in $X$, the pullback $x^*M$ is unit-root. We 
	denote the category of unit-root objects by $\Fisoc(X)^{et} \otimes L$ (resp. 
	$\Fisoc(X)^{\dagger, et} \otimes L$). 
\end{definition}

\begin{theorem}[Katz (see \cite{Katz-p-adic_properties} or 
	\cite{Crew-F-isocrystals_padic_reps}), Tsuzuki (see
	\cite{Tsuzuki1})] \label{mod p riemann-hurwitz}
	There is an equivalence of categories
	\begin{align*}
	\Fisoc^{et}(X)\otimes L &\longleftrightarrow \Bigg\{\begin{array}  {l}
	\text{continuous finite dimentional} \\  
	\text{representations} ~ 
	\rho:\pi_1^{et}(X) \to GL_n(L)
	\end{array} \Bigg\} .
	\end{align*}
	If we restrict ourselves to unit-root $\mathbf{F}$-isocrystals over $\Spec(F)$ 
	that
	are overconvergent we obtain:
	\begin{align*}
	\Fisoc^{\dagger, et}(\Spec(F))\otimes L &\longleftrightarrow \{\text{continuous 
		representation} ~ \rho:G_F \to GL_n(L), ~|\rho(I_F)|<\infty\},
	\end{align*}
	where $I_F$ denotes the inertia subgroup of $G_F$.
\end{theorem}
\paragraph{Embeddings of $L$ into $\Q_p^{alg}$}
Let $(M,f)$ be an object of $\Fisoc(X)\otimes L$. For any $g\in \Theta$, we obtain an 
object 
$(M,f\circ g^{-1})$ of $\Fisoc(X) \otimes g(L)$. 
In particular, if $L$ is a Galois extension of $\Q_p$ there is an action of 
$G_{L/\Q_p}$ on $(M,f)$. What does this mean in terms of Galois representations? 
Assume that $(M,f)$ is unit-root
and corresponds to the Galois representation $\rho:\pi_1^{et}(X) \to 
\mathcal{O}_L^\times$. 
Then $(M,f\circ g^{-1})$ corresponds to the composition $g\circ\rho$, which we 
denote by $\rho^g$. 
\paragraph{Pullbacks}
Let $f:\Spec(Y) \to \Spec(X)$ be a finite \'etale morphism. Let 
$M$ be an object of $\Fisoc(X)^{et} \otimes L$ corresponding to the 
representation $\rho$ of $\pi_1^{et}(X)$. Then the pullback $f^* M$ corresponds to 
the pullback
of $\rho$ along the map $\pi_1^{et}(Y) \to \pi_1^{et}(X)$.

\paragraph{Galois representations associated to isoclinic $\mathbf{F}$-isocrystals}
\label{paragraph:galois reps for isoclinic}
\begin{definition}
	Let $\omega \in \Q_p^{alg}$. After enlarging $L$ we may assume that $\omega \in 
	L$. Let $L(\omega)$ denote the object of $\Fisoc(\Spec(\mathfrak{K})) \otimes L$
	whose Frobenius structure is multiplication by $\omega^{-1}$. 
	By abuse of notation, we regard $L(\omega)$ as an object of $\Fisoc(X) \otimes L$.
	We say that an object $M$ of $\Fisoc(X) \otimes L$ is 
	isoclinic of slope $\alpha$ if there exists $\omega \in \Q_p^{alg}$
	with $v_q(\omega)=\alpha$ such that $M\otimes L(\omega)$ is unit-root.
\end{definition}

Now let $M$ be an isoclinic object of $\Fisoc(X)\otimes L$ with slope $\alpha$. Let 
$\omega_1,\omega_2 \in \Q_p^{alg}$ have $q$-adic valuation $\alpha$. Then 
$M_i=M\otimes L(\omega_i)$ are both unit-root $\mathbf{F}$-isocrystals. Let $\rho_i$ 
be the 
representation of $\pi_1^{et}(X)$ corresponding to $M_i$. We have $M_1\cong M_2 
\otimes L(\frac{\omega_1}{\omega_2})$. The $\mathbf{F}$-isocrystal 
$L(\frac{\omega_1}{\omega_2})$ is unit-root, and thus corresponds to a $p$-adic 
character $\chi$ of 
$\pi_1^{et}(X)$.  Note that $L(\frac{\omega_1}{\omega_2})$ descends along the
structure map $X \to \Spec(\mathfrak{K})$. This means that $\chi$ descends to a 
character of $G_\mathfrak{K}$. Thus, we may associate to $M$ a $p$-adic 
representation of $\pi_1^{et}(X)$ that is well-defined up to twist by a character of 
$G_\mathfrak{K}$.

\paragraph{Slope filtrations and log-decay}
\label{subsection: slope filtrations}
Let $M$ be an object of $\Fisoc^\dagger(X)\otimes L$. After replacing $X$ with a 
dense open 
subset, there is a unique slope filtration
\[0=M_0 \subset M_1 \subset M_2 \subset ... \subset 
M_d=\iota^\dagger(M),\]
where each graded piece $gr_i(M)=M_{i}/M_{i-1}$ is isoclinic of
slope $\alpha_i$ and $\alpha_1<\alpha_2<\dots<\alpha_d$ (see 
\cite[Theorem 2.4.2]{katz-slope_filtration} or \cite[\S 
4]{Kedlaya-notes_on_isocrystals}). 
\begin{definition}
	\label{definition: Newton polygons} The Newton polygon $NP(M)$
	of $M$ is the lower convex hull of the points $(rank(M_i), \sum_{i=0}^n rank(gr_i(M))\alpha_i)$ in the $xy$-plane, where
	$i$ ranges from $0$ to $d$. 
\end{definition}
This slope filtration is functorial in $X$.
If $\alpha_1=0$, then $M_1$ is a unit-root convergent $\mathbf{F}$-isocrystal.
In this case, we will denote $M_1$ by $M^{unit}$. The following conjecture relates 
the 
rate
of logarithmic decay of $M_i$ to the differences between consecutive slopes.

\begin{conjecture}  \label{log-decay and slope filtrations}
	Assume that $M$ is irreducible. Take $X=\Spec(F)$ and let $r_i = 
	\frac{1}{\alpha_{i+1} - \alpha_i}$.
	Then $M_i$ has strict 
	$r_i$-log-decay.
\end{conjecture}
\noindent In \S \ref{Section: main results}
we provide evidence for this conjecture.
For example, we prove that $M_i$ has $r_i$-log-decay
for any $M$. We also prove Conjecture \ref{log-decay and slope filtrations}
when the slopes of $M$ are integers and $L=\Q_p$.

\section{Ramification theory for $p$-adic characters}
\label{section: ramification}
\subsection{Ramification and $p$-adic Lie filtrations}
For any $k \geq 0 $ we define
\begin{align*}
O_k^\times &= \{x \in \mathcal{O}_L^\times ~~|~~v_p(1-x)> k\}, \\
O_k^+ &= \{ x \in \mathcal{O}_L^+ ~~|~~v_p(x) > k\}.
\end{align*}	
Consider a multiplicative character $\rho: G_F \to \mathcal{O}_L^\times$
(resp. an additive character $\psi: G_F \to \mathcal{O}_L^+$). We let
$s_k(\rho)$ (resp. $s_k(\psi)$) denote the largest upper numbering ramification break
of the extension corresponding to $\rho^{-1}(O_k^\times ) \subset G_F$ (resp. 
$\psi^{-1}(O_k^+) \subset G_F$). The following lemma is immediate:

\begin{lemma}   \label{ramification lemma: add to mult}
	Let $\psi:G_F \to \mathcal{O}_L^+$ be an additive character
	with $\psi(G_F) \subset O_{1}^+$. Then 
	$s_k(\psi) = s_k(\exp(\psi))$ for all $k \geq 1$.
	Similarly, let $\rho: G_F \to \mathcal{O}_L^\times$ be a multiplicative
	character with $\rho(G_F)\subset O_1^\times$. Then $s_k(\rho) = s_k(\log(\rho))$
	for all $k \geq 1$.
\end{lemma}

\noindent The image $\psi(G_F) \subset \mathcal{O}_L^+$ is a free $\Z_p$-module whose
rank $d$ is at most $[L:\Q_p]$. We may break up $\psi$ as $\psi=\sum \psi_i$,
such that the image of $\psi_i$ is a rank one $\Z_p$-module. Note that
\begin{align} \label{breakup ramification into zp towers eq}
s_k(\psi) &= \max \{s_k(\psi_i)\}.
\end{align}
This follows from the ``quotient property'' of 
the upper ramificaiton numbers (see, e.g., \cite[Proposition 
14]{Serre-local_fields}). Finally, we mention
a natural restriction on the growth of $s_k(\psi)$ and $s_k(\rho)$.

\begin{lemma} \label{monodromy growth restriction lemma}
	Let $\rho$ (resp. $\psi$) be a multiplicative (resp. additive) character. 
	For all $k \geq 0$ we have $s_{k+1}(\rho)\geq p s_k(\rho)$
	and $s_{k+1}(\psi) \geq p s_k(\psi)$. 
\end{lemma}

\begin{proof}
	Let $F_\infty/F$ be the Abelian $p$-adic Lie extension
	corresponding to $\psi$. By local class field theory, $F_\infty/F$ corresponds
	to an open subgroup $H$ of $\mathcal{O}_F^\times$ such that 
	$\mathcal{O}_F^\times / H \cong G_{F_\infty/F}$.
	The image of the subgroup $U^s = 1 + T^s\mathcal{O}_F$ in $\mathcal{O}_F^\times / 
	H$ corresponds to the 
	subgroup $G_{F_\infty/F}^s$.  For any group
	$A$ we let $A^{\times p}$ denote the $\{ a^p~~|~a \in A\}$.  Since we are in 
	characteristic $p$ we have
	$(U^s)^{\times p} \subset U^{ps}$ for any $s$.  On the Galois side of the 
	correspondence this means $(G_{F_\infty/F}^s)^{\times p} \subset 
	G_{F_\infty/F}^{ps}$.
	However, since $G_{F_\infty/F}$ is isomorphic to $\Z_p^d$ for some $d$, we know 
	that 
	$(G_{F_\infty/F}^{s_{k}(\rho)})^{\times p} = G_{F_\infty/F}^{s_{k+1}(\rho)}$.
	The multiplicative case is identical.
\end{proof}

\begin{corollary} \label{corollary: no small r bounded monodromy}
	Let $r<1$. If $\rho$ has $r$-bounded monodromy, then $\rho$ has finite 
	monodromy.
\end{corollary}

\begin{definition}
	Let $\rho$ be a multiplicative character. We say $\rho$ is \emph{harshly 
		ramified} if for $k \gg 0$ we have $s_{k+1}(\rho) > ps_k(\rho)$.
\end{definition}

\subsection{Ramification and base change}

\begin{proposition} \label{base change ramification proposition}
	Let $\rho: G_F \to \mathcal{O}_L^\times$ be a multiplicative character.
	Let $K$ be a finite Galois extension of $F$. There exists $c\geq 0$ such that 
	for $k$ 
	sufficiently large 
	we have
	\begin{align*}
	s_k(\rho|_{G_K}) &= |G_{K/F}^0|s_k(\rho) - c.
	\end{align*}
	Furthermore, $c=0$ if and only if $K$ is tamely ramified over $F$.
\end{proposition}

\begin{proof}
	Recall the inverse Hasse-Herbrand function
	\begin{align}
	\psi_{K/F}(s) &= \int_0^s [G_{K/F}^0:G_{K/F}^x]dx. \label{hasse-herbrand eq}
	\end{align}
	Let $F_\infty$ (resp. $K_\infty$) be the fixed field of $\rho^{-1}(\{1\})$
	(resp. $\rho|_{G_K}^{-1}(\{1\})$). We claim
	that for $s$ sufficiently large, the map $G_{K_\infty/K} \to G_{F_\infty/F}$
	restricts to an isomorphism
	\begin{align} \label{upper numbering base change}
	G_{K_{\infty}/K}^{\psi_{K/F}(s)} &\cong G_{F_\infty/F}^s. 
	\end{align}
	To see this, let $K_1$ be a finite Galois extension of $F$
	containing $K$.
	Using \cite[\S 4, Proposition 15]{Serre-local_fields},
	we see that
	\begin{align*}
	G_{K_1/F}^s \cap G_{K_1/K} &= (G_{K_1/F})_{\psi_{K_1/F}(s)} \cap G_{K_1/K} \\
	&= (G_{K_1/K})_{\psi_{K_1/F}(s)} \\
	&= (G_{K_1/K})_{\psi_{K_1/K} \circ \psi_{K/F}(s)} \\
	&= (G_{K_1/K})^{\psi_{K/F}(s)}.
	\end{align*}
	By taking a limit along the finite subextensions of $K_\infty$ over $F$ we obtain
	\begin{align} \label{upper numbering intersection relation}
	G_{K_\infty/F}^s \cap G_{K_\infty/K} &= G_{K_\infty/K}^{\psi_{K/F}(s)}.
	\end{align}
	Since $K/F$ is a finite extension we have 
	$G_{K_\infty/F}^s \subset G_{K_\infty/K}$ for large $s$. Also, for $s$ large the 
	restriction 
	map
	$G_{K_\infty/F}^s \to G_{F_\infty/F}^s$ is an isomorphism. Combining this with
	\eqref{upper numbering intersection relation} proves \eqref{upper numbering base 
		change}.
	From \eqref{upper numbering base change} we see that 
	$s_k(\rho|_{G_K})=\psi_{K/F}(s_k(\rho))$ for $k$ sufficiently large. The
	proposition follows from \eqref{hasse-herbrand eq}.
\end{proof}

\begin{corollary} \label{corollary: harsh ramification base change}
	Assume $\rho(I_F)$ is infinite. If $K$ is wildly ramified over $F$, then 
	$\rho|_{G_K}$ is harshly 
	ramified. 
\end{corollary}
\begin{proof}
	This follows from Lemma \ref{monodromy growth restriction lemma}
	and Proposition \ref{base change ramification proposition}.
\end{proof}

\begin{corollary} \label{corollary: pseudostable monodromy after base change}
	Adopt the notation from Proposition \ref{base change ramification 
		proposition}. Then $\rho$ has $a$-stable monodromy (resp. $r$-bounded 
	monodromy) if and only if $\rho|_{G_K}$ has $a$-stable monodromy (resp. 
	$r$-bounded monodromy).
\end{corollary}

\section{Monodromy of rank one $\mathbf{F}$-isocrystals}
\label{section: Monodromy section}

\subsection{Frobenius structures of $p$-adic characters} 
\label{subsection: p-adic characters}
\subsubsection{Rings of periods}
Let $\widetilde{\mathcal{E}}=L\otimes_{W(\mathbb{F}_{q})} W(F^{alg})$.
There is an embedding $\iota: 
\mathcal{E} \hookrightarrow 
\widetilde{\mathcal{E}}$  that sends $T$ to the Teichmuller lift $[T]$.
Recall that $\sigma: \mathcal{E} \to \mathcal{E}$ (resp. $\nu: \mathcal{E} \to \mathcal{E}$) is the endomorphism that sends $T$ to $T^q$ (resp. $T$ to  $T^p$) and restricts to endomorphism $\sigma$ (resp. $\nu$) of $L$ as defined in \S \ref{section: conventions}.
If $K$ is a finite separable extension of $F$, there exists a unique unramified 
extension $\mathcal{E}^K$ of $\mathcal{E}$
contained in $\widetilde{\mathcal{E}}$ whose residue field is $K$ (see 
\cite[Theorem 2.2]{Matsuda-local_index}).
Define
\begin{align*}
\mathcal{E}^{un} &= \bigcup_{[K:F]<\infty}\mathcal{E}^{K}, 
\end{align*}
and let $\widetilde{\mathcal{E}}^{un}$ be the $p$-adic
completion of $\mathcal{E}^{un}$. Note that $G_F$ acts continuously on 
$\widetilde{\mathcal{E}}^{un}$. 
\subsubsection{Multiplicative characters}
\label{subsubsection: mult case} 
Let $\rho: G_F \to \mathcal{O}_L^\times$ be a multiplicative character
and let $V=e_0 L$ be a one dimensional vector space over $L$ on which $G_F$ acts 
through $\rho$.  The corresponding object $M$ of $\Fisoc^{et}(\Spec(F))\otimes L$ is 
$
(\widetilde{\mathcal{E}}^{un} \otimes_{L} V)^{G_F}$,
where the Frobenius acts by 
$\sigma \otimes \text{id}$.
Thus $M$ consists of elements $x_0\otimes e_0 $ such that $\frac{x_0}{x_0^\gamma} = 
\rho(\gamma)$.
The Frobenius structure
of $M$ is given by
$m=\frac{x_0^\sigma}{x_0}$.
Note that $m$ is well-defined up to multiplication
by elements of the form $\frac{c^\sigma}{c}$, where $c \in \mathcal{E}^\times$. 

Let $V_0=\Q_p e_0$, so that $V=V_0 \otimes_{\Q_p} L$. Assume that $\rho$ 
factors
through a map $\rho_0: G_F \to GL(V_0)$ and let $M_0$ be the corresponding object of 
$\Fisoc^{et}(\Spec(F))$. There is an isomorphism
\begin{align*}
(\widetilde{\mathcal{E}}^{un} \otimes_{\Q_p} V_0)^{G_F} \to 
(\widetilde{\mathcal{E}}^{un} \otimes_{L} V)^{G_F}
\end{align*}
sending $x_0 \otimes_{\Q_p} e_0$ to $x_0 \otimes_L e_0$. Then
$m_0=\frac{x_0^{\nu}}{x_0}$ is a Frobenius structure
of $M_0$ 
and we have $m = \prod\limits_{i=0}^{f-1} m_0^{\nu^i}$
\subsubsection{Additive characters} \label{subsubsection: additive characters}
Let $\psi:G_F \to p\mathcal{O}_L^+$ be an additive character. 
For some $y_0 \in\widetilde{\mathcal{E}}^{un}$ and $a \in \mathcal{E}$ we have
\begin{align*}
y_0 - y_0^{\gamma} &= \psi(\gamma), \\
y_0^\sigma - y_0 &= a.
\end{align*}
We refer to $a$ as the \emph{Frobenius structure}
of $\psi$. It is well-defined up to addition by elements $c^\sigma - c$ for $c \in 
\mathcal{E}$. If we set $\rho =\exp (\psi)$, then we may take $y_0=\log(x_0)$ and 
$a=\log(m)$
(here $x_0$ and $m$ are as in \S \ref{subsubsection: mult case}).
If $\psi$ factors through $\psi_0: G_F \to p \Z_p^+$,
we obtain
\begin{align*}
y_0^\nu - y_0 &= \log(m_0), \\
\log(m)&= \sum_{i=0}^{f-1} \log(m_0)^{\nu^i}.
\end{align*}
%

\subsubsection{Maximal Frobenius structures}
Let $x \in \mathcal{O}_{\mathcal{E}_{F^{un},L}}$. For $k \geq 0$, define
the $k$-th weighted partial valuation as follows:
\begin{align*}
c_k(x) &= \min_{\stackrel{i \in \Z_{\geq 0}}{i\leq k}}\{ p^i w_{k-i}(x)\} \cup \{0\}.
\end{align*}
These weighted partial valuations satisfy the following properties:
\begin{lemma} \label{lemma: weighted partial valuations}
	Let $x,y \in \mathcal{O}_{\mathcal{E}_{F^{un},L}}$. The following hold:
	\begin{enumerate}[label=(\roman*)]
		\item For $k\geq 0$ we have $c_{k+1}(px) = c_k(x)$, $c_k(x^\nu)=pc_k(x)$, and 
		$c_k(x^\sigma)=qc_k(x)$. \label{weighted partial prop 1}
		\item We have $c_k(x+y) \geq \min(c_k(x),c_k(y))$. If the minimum is attained 
		exactly once there is equality. \label{weighted partial prop 2}
		\item We have $c_k(x\cdot y) \geq c_k(x) + c_k(y)$.  \label{weighted partial 
			prop 3}
		\item If $v_p(x-1)\geq 1$ (resp. $v_p(x)\geq 1$), then $c_k(x)=c_k(\log(x))$ 
		(resp. $c_k(x)=c_k(\exp(x))$) for all $k\geq 1$.  \label{weighted partial 
			prop 4}
	\end{enumerate}
\end{lemma}
\begin{proof}
	Statements \ref{weighted partial prop 1}-\ref{weighted partial prop 3}
	follow from the definition and the statement about 
	exponentials will 
	follow from the statement about logs. Write $x=1+py$, with $y \in 
	\mathcal{O}_{\mathcal{E}_{F^{un},L}}$. It is enough to prove 
	$c_k(\frac{p^ny^n}{n})>c_k(py)$ for all $n\geq 2$. Let
	$m=v_p(n)$. Then by \ref{weighted partial prop 1} and \ref{weighted partial prop 
		3}, we see 
	\begin{align*}
	c_k\Big (\frac{p^ny^n}{n} \Big) &\geq\frac{n}{p^{n-1-m}} c_k(py) \\
	&> c_k(py).
	\end{align*}
\end{proof}

\begin{definition}
	Let $x \in \mathcal{O}_{\mathcal{E}_{F^{un},L}}$. We say that $x$ is 
	\emph{maximal}
	if the following holds: for all $k\geq 0$, we have $q|c_{k+1}(x)$ if and only if 
	$c_{k+1}(x)=pc_k(x)$.
\end{definition}
\noindent The term maximal is justified by the following Proposition:
\begin{proposition} \label{Proposition: minimal definition works}
	The following holds:
	\begin{enumerate}[label=(\roman*)]
		\item Let $x \in \mathcal{O}_{\mathcal{E}_{F^{un},L}}$ be maximal. If
		$y \in \mathcal{O}_{\mathcal{E}_{F^{un},L}}$, then
		$c_k(x)\geq c_k(x+y^\sigma - y)$ for all $k\geq 0$.  \label{maximal prop 1}
		\item Let $x \in 1 + p\mathcal{O}_{\mathcal{E}_{F^{un},L}}$ be maximal. If $y 
		\in 1 + 
		p\mathcal{O}_{\mathcal{E}_{F^{un},L}}$, then $c_k(x)\geq 
		c_k(x\frac{y^\sigma}{y})$ for 
		all $k\geq 0$. \label{maximal prop 2}
	\end{enumerate}
\end{proposition}
\begin{proof}
	By Lemma \ref{lemma: weighted partial valuations} it is enough to prove 
	\ref{maximal prop 
		1}. Let $z=x+y^\sigma - y$ and let $k\geq 0$ be the smallest value 
	with $c_k(z)>c_k(x)$. Since $x$ is maximal, we have $c_k(x) < 0$. By Lemma 
	\ref{lemma: weighted partial valuations} we see that
	$c_k(y^\sigma)=c_k(x)$. Thus $q|c_k(x)$, so $c_k(x)=pc_{k-1}(x)\geq pc_{k-1}(z)$.
	This implies $c_k(z) > pc_{k-1}(z)$,
	which is impossible. 
\end{proof}

\begin{definition}
	Let $\rho:G_F \to \mathcal{O}_L^\times$ be a multiplicative character.
	A \emph{maximal Frobenius structure} of $\rho$ is a Frobenius structure
	$m$ that is maximal. A $\Theta$-\emph{maximal Frobenius structure} of $\rho$
	is a set $\{m_g\}_{g \in \Theta}$, where $m_g$ is a maximal Frobenius structure
	of $\rho^g$. We make analogous definitions for additive characters. 
\end{definition}

\begin{corollary} \label{corollary: all maximal frobenius are equal}
	Let $\rho: G_F \to 1+p\mathcal{O}_L$ (resp. $\psi: G_F \to \mathcal{O}_L^+$)
	be a multiplicative (resp. additive) character. Let $\alpha_1, \alpha_2$ be a 
	maximal
	Frobenius structure of $\rho$ (resp. $\psi$). Then $c_k(\alpha_1)=c_k(\alpha_2)$
	for all $k\geq 0$.
\end{corollary}
\noindent It will be helpful to distinguish when a Frobenius structure is maximal. 
This motivates the following:
\begin{lemma} \label{lemma: non divisble are maximal}
	If $x \in \mathcal{O}_{\mathcal{E}_\infty^{(q)}}$, then $x$ is maximal. 
\end{lemma}

\begin{remark}
	Let $\rho: G_F \to 1+p\mathcal{O}_L$ be a multiplicative character. One may show that $\rho$ has
	a Frobenius structure contained in $\mathcal{O}_{\mathcal{E}_\infty^{(q)}}$ (start with a Frobenius
	structure $\alpha \equiv 1 \mod p$ and successively find a Frobenius structure that looks like
	it is in $\mathcal{O}_{\mathcal{E}_\infty^{(q)}}$ modulo powers of $\pi$).
	Thus, by Lemma \ref{lemma: non divisble are maximal} we know that $\rho$ has a maximal Frobenius structure. 
\end{remark}


\subsection{The additive situation}
For this subsection we assume $\mathfrak{K}$ is a finite field.
\begin{proposition} \label{main additive monodromy theorem}
	Let $\psi: G_F \to \mathcal{O}_L^+$ be an additive character
	and let $\{a_g\}_{g \in \Theta}$ be a $\Theta$-maximal Frobenius structure. Then 
	for
	$k\geq 0 $ we have
	\begin{align} \label{additive monodromy eq}
	\frac{q}{p}s_k(\psi) &= \max_{g \in \Theta} \{-c_k(a_g)\}.
	\end{align}		
\end{proposition}

\noindent We will deduce this proposition from a theorem due to Kosters and Wan:

\begin{theorem} \label{Kosters-Wan theorem} (Kosters-Wan, see 
	\cite[Proposition 3.3 or \S 4.1]{Kosters-Wan})
	Let $\psi: G_F \to p\Z_p^+$ be an additive
	character that surjects onto $p\Z_p^+$. Then there exists a maximal Frobenius $a\in \mathcal{O}_{\mathcal{E}_{F,\Q_p}}$ of $\psi$ 
	and
	$-c_k(a)=s_k(\psi)$ for all $k\geq 0$.
\end{theorem}
\begin{proof}
	Let $F_\infty$ be the fixed field of $\ker(\psi)$. Then $F_\infty/F$ is a $\Z_p$-extension, and thus
	by Artin-Schreier-Witt theory corresponds to an equivalence class of $W(F)/(1-\nu)W(F)$ (here $\nu$
	denotes the standard $p$-power Frobenius endomorphism on the ring of $p$-typical Witt vectors). 
	By \cite[Proposition 3.1]{Kosters-Wan}, for any $b \in \mathfrak{K}$ with $Tr_{\mathfrak{K}/\mathbb{F}_p}(b)\neq 0$,
	there exists a unique representative $a_0$ in $z$ of the form
	\begin{align*}
		a_0 &= r[b]+ \sum_{i\geq 1,(i,p)=1} r_i T^{-i},
	\end{align*} 
	where $r \in \Z_p$ and $r_i \in W(\mathfrak{K})$ (here we regard $\mathcal{E}$ as a subring of $W(F)$ 
	via the map $\iota$ defined at the beginning of \S \ref{subsection: p-adic characters}). Note that
	by Lemma \ref{lemma: non divisble are maximal}, the element $a_0$ is maximal.
	As in \S \ref{subsubsection: additive characters}, from $a_0$ we obtain a surjective character $\psi_0: Gal(F_\infty/F) \to \Z_p$ (take $y_0$ satisfying
	$y_0^\nu - y_0 = a_0$ and then set $\psi_0(g)=y_0^g - y_0$). 
	Thus, for some $b \in \Z_p$ we have $\psi=b\psi_0$, so that $a=ba_0$ is a maximal Frobenius structure of $\psi$. 
	A proposition due to Kosters-Wan \cite[Proposition 3.3]{Kosters-Wan} states that $s_n(\psi_0)$ is equal to 
	$-c_n(a_0)$ (note that \cite{Kosters-Wan} state their result in terms of conductors,
	but this easily translates into a statement about higher ramification groups). From here we deduce that
	$s_n(\psi)=-c_n(a)$. 
\end{proof}

\begin{corollary} \label{Kosters-Wan base change}
	Let $\psi: G_F \to \mathcal{O}_L^+$ be an additive
	character such that $\psi(G_F)\subset b \Z_p^+$ for some $b \in 
	\mathcal{O}_L^+$.
	Let $a$ be a maximal Frobenius structure of $\psi$. Then $\frac{q}{p}s_k(\psi) = 
	-c_k(a)$
	for all $k\geq 0$.
\end{corollary}
\begin{proof}
	Let $\psi_0:G_F \to \Z_p^+$ be the character defined
	by $b^{-1}\psi$ and let $a_0$ be a maximal Frobenius of $\psi_0$. 
	By Lemma \ref{lemma: weighted partial valuations} we see that $b\sum\limits_{i=0}^{f-1} a_0^{\nu^i}$ is a Frobenius structure of 
	$\psi$. The result follows
	from Corollary \ref{corollary: all maximal frobenius are equal} and Theorem 
	\ref{Kosters-Wan theorem}.
\end{proof}

\begin{definition} \label{definition: nice}
	Let $M\subset \mathcal{O}_L$ be a 
	$\Z_p$-module and let $B$
	be a basis of $M$. For $i=0,\dots, e-1$,
	define $B_i=\{b \in B | v_p(b) \equiv \frac{i}{e} \mod \Z\}$. In particular,
	we may write $B_i=\{\pi^i p^{r_{i,1}} u_{i,1}, \dots, \pi^i p^{r_{i,d_i}}u_{i,d_i}\}$. We say $B$ is
	\emph{nice} if for each $i$, the reductions of 
	$u_{i,1}, \dots, u_{i,d_i}$ modulo $\pi$ are linearly
	independent over $\mathbb{F}_p$. 
\end{definition}
\begin{lemma} \label{lemma: nice basis exists}
	Every $\Z_p$-module $M \subset \mathcal{O}_L$
	has a \emph{nice} basis. 
\end{lemma}
\begin{lemma} \label{linear combinations of nice basis}
	Let $B$ be a nice basis of $M$. For every
	$b\in B$, let $x_b$ be an element of $W(\mathfrak{K})$. Then
	\begin{align*}
	\min_{b \in B} \{ v_p(x_b b) \}&= 
	\min_{g \in \Theta}\Big \{ v_p\Big (\sum_{b \in B} x_b b^{g}\Big )\Big \}.
	\end{align*}
\end{lemma}

\begin{proof}
	We may replace $L$ with a field that is Galois over $\Q_p$. Let 
	$r=\min_{b 
		\in B}(v_p(x_bb))$ and write $r=\frac{i}{e}+n$ with
	$0\leq i < e$. Let $B'\subset B$ consist of all $b\in B$ such that
	$v_p(x_bb)=r$. Note that $B'$ is a subset of $B_i$ from Definition \ref{definition: nice}. For each $b\in B'$ we write $b=u_b\pi^i p^{r_b}$
	and $x_b=v_b p^{s_b}$, where $u_b\in \mathcal{O}_L^\times$ and
	$v_b \in W(\mathfrak{K})^\times$. Let $\overline{A}$
	be a matrix whose columns are $[\overline{u}_b,\overline{u}_b^p, \dots, 
	\overline{u}_b^{p^{f-1}}]^T$ for each $b \in B'$.
	By the definition of nice, we know that the rank of 
	$\overline{A}$ is $|B'|$. Thus, there exists $j$ such that
	\begin{align*}
	\sum_{b \in B'} \overline{u_b}^{p^j} \overline{v_b} & \neq 0.
	\end{align*}
	Let $g$ be an element of $G_{L/\Q_p}$ that reduces to the $p^j$-th
	power map on $\mathcal{O}_L/\pi\mathcal{O}_L$ and let $c 
	=\frac{g(\pi^i)}{\pi^i}$. Then
	\begin{align*}
	\sum_{b \in B} x_b b^g &\equiv c\pi^ip^n \sum_{b \in B'} 
	\overline{u_b}^{p^j} \overline{v_b} \not \equiv 0\mod \pi^{er+1}.
	\end{align*}
\end{proof}

\begin{corollary} \label{linear combination of power series using nice basis}
	Consider a subset $\{y_b\}_{b \in B} \subset \mathcal{O}_{\mathcal{E}_{F,\Q_p}}$. 
	There exists $g \in \Theta$ such that
	\begin{align*}
	\max_{b \in B} \{-c_k( y_b b) \} &= -c_k\Big (\sum_{b \in B}  y_b b^g \Big ). 
	\end{align*} 
\end{corollary}

\begin{proof}
	Let $n=\min_{b \in B} w_k( y_b b)$ and let
	$x_b\in W(\mathfrak{K})$ denote the coefficient of $T^n$ in $y_b$. Note that
	$\min_{b \in B} v_p(x_b b) \leq k$. 
	By Lemma
	\ref{linear combinations of nice basis}, there exists $g \in \Theta$ with 
	$v_p(\sum_{b \in B} x_b b^{g})\leq k$.
	It follows that the $p$-adic valuation of the coefficient
	of $T^n$ in $\sum_{b \in B} y_b b^{g}$ is less than $k$. 
\end{proof}

\begin{proof} (Of Proposition \ref{main additive monodromy theorem})
	Let $B$ be a nice basis 
	of $\psi(G_F)$. Decompose
	$\psi$ as $\sum_{b \in B} b\psi_b$, where 
	the image of $\psi_b$ is $\Z_p$. For each $b \in B$, let $y_b \in 
	\mathcal{O}_{\mathcal{E}_{F,\Q_p}}$
	be a maximal Frobenius of $\psi_b$. Then $a_g = \sum  y_b b^g$
	is a maximal Frobenius of $\psi^g$ for each $g \in \Theta$.
	By \eqref{breakup ramification into zp towers eq},
	Corollary \ref{Kosters-Wan base change}, and  Corollary \ref{linear combination 
	of power series using nice basis}
	we have
	\begin{align*}
	\frac{q}{p}s_k(\psi) &= \max_{b \in B}\Big\{ \frac{q}{p}s_k(b\psi_b) \Big 
	\}\\
	&=\max_{b \in B}  \{-c_k(y_b b) \}\\
	&= \max_{g \in \Theta} \{-c_k(a_g) \}.
	\end{align*}
\end{proof}

\subsection{The multiplicative situation}
Again, we assume $\mathfrak{K}$ is a finite field for this subsection.
\begin{proposition} \label{main monodromy theorem}
	Let $\rho: G_F \to 1+p\mathcal{O}_L$ and let $\{m_g\}_{g 
		\in \Theta}$ be a $\Theta$-maximal Frobenius structure of $\rho|_{F^{ur}}$. 
		Then for
	$k\geq 0$, we have
	\begin{align*} 
	\frac{q}{p}s_k(\rho) &= \max_{g \in \Theta} \{-c_k(m_g)\}.
	\end{align*}	
\end{proposition}

\begin{proof}
	By Lemma \ref{lemma: weighted partial valuations} we know that
	$\{\log(m_g)\}_{g \in \Theta}$ is a $\Theta$-maximal Frobenius structure
	for $\log \circ \rho$ and that $c_k(m_g) = c_k(\log(m_g))$ for all $k\geq 0$. The 
	proposition follows from Lemma \ref{ramification lemma: add to mult} and 
	Proposition \ref{main additive monodromy theorem}.
\end{proof}	
\begin{corollary} \label{corollary: harshly ramified monodromy}
	Adopt the notation of Proposition \ref{main monodromy theorem} and
	assume $\rho$ is harshly ramified. For
	$k$ large, we have
	\begin{align*} 
	\frac{q}{p}s_k(\rho) &= \max_{g \in \Theta} \{ -w_k(m_g)\}.
	\end{align*}	
\end{corollary}

\begin{corollary} \label{corollary: small log decay}
	Let $\rho$ be a multiplicative character of $G_F$ and let $M$ be the 
	corresponding 
	unit-root $\mathbf{F}$-isocrystal. Let $r>0$ and assume that
	$M^g$ has $r$-log-decay 
	for each $g\in \Theta $. If $r\geq 1$, then $\rho$ has
	$r$-bounded monodromy. If $r<1$, then $M$ is overconvergent
	and $\rho$ has finite monodromy.
\end{corollary}
\begin{proof}
	After replacing $F$ with a finite extension we may assume 
	$\rho(G_F) \subset 1+p\mathcal{O}_L$. We may also assume that $\rho$ is 
	either harshly ramified or unramified by Corollary \ref{corollary: harsh 
	ramification base change}. Let $\{m_g\}_{g
		\in \Theta}$ be a $\Theta$-maximal Frobenius structure. There exists $c>0$ 
		such that $\{m_g\}_{g
		\in \Theta} \subset \matcaloe{r,c}$. By 
	Proposition \ref{main monodromy 
		theorem}, we know $\rho$ has $r$-bounded monodromy. 
	The statement about $r<1$ follows from Corollary \ref{corollary: no small r bounded monodromy}.
\end{proof}

\section{Recursive Frobenius equations}
\label{section: recursive frobenius} 
For this section, we define 
$E_a$ to be $\Frac{\mathcal{O}_L \otimes_{\mathbb{F}_q} W(\mathbb{F}_{q^a})}$ and 
$\mathcal{E}_a$ (resp $\mathcal{E}_a^\dagger$) to be 
$\mathcal{E}_{\mathbb{F}_{q^a}((T)),L}$ (resp. 
$\mathcal{E}_{\mathbb{F}_{q^a}((T)),L}^\dagger$). 
Note that $\mathcal{E}_{\infty}$ is the closure of $\bigcup_{a=1}^\infty 
\mathcal{E}_a$
and $\bigcup_{a=1}^\infty \mathcal{E}_a^\dagger$ is dense in 
$\mathcal{E}_\infty^\dagger$. 

\subsection{Basic definitions}
Let $A \in M_{d\times d}(\mathcal{O}_{\mathcal{E}_\infty})$ with $v_q(A)> 0$
and let $C \in M_{d \times 1}(\mathcal{O}_{\mathcal{E}_\infty})$. We 
define $R_a(A;C)$ to be the unique $d \times 1$ matrix satisfying the 
recursive 
Frobenius 
equation: 
\begin{align} \label{recursive def}
x&=Ax^{\sigma^a} +C.
\end{align}  
We define $R(A;C)$ to be $R_1(A;C)$. These solutions have the following explicit 
formula:
\begin{align} \label{frobenius solution unraveled}
R_a(A;C) &=\sum_{i=0}^\infty A^{\frac{1-\sigma^{ai}}{1-\sigma^a}}C^{\sigma^{ai}}.
\end{align}
More generally, for $A_1,\dots, A_m,B_1,\dots, B_{m-1} \in M_{d\times 
	d}(\mathcal{O}_{\mathcal{E}_{\infty}})$ with $v_q(A_i)>0$, we give the recursive 
definition:\[R_a(A_1,\dots,A_m;B_1,\dots,B_{m-1},C) 
=R_a(A_1,\dots,A_{m-1};B_1,\dots,B_{m-1}\cdot 
R_a(A_m;C)^\sigma).\]
If the $A_i$ are all equal to $A$, we define:
\begin{align*}
R_a(A; B_1,\dots,B_{m-1},C) &=R_a(A_1,\dots,A_m;B_1,\dots,B_{m-1},C).
\end{align*} 
When $d=1$ we will drop the pretext of dealing with matrices and view everything 
as elements of $\mathcal{O}_{\mathcal{E}_{\infty}}$. 
\begin{lemma} \label{addition formula}
	We have
	\begin{align}
	R_a(A,B+C) &= R_a(A,B)+R_a(A,C) \label{Frob eq additive formula 1}\\
	R_a(A+B,C) &= \sum_{m=0}^\infty R_a(A; \underbrace{B,\dots,B}_{m \text{ times}}, 
	C).  
	\label{Frob eq additive formula 2}
	\end{align}
\end{lemma}
\begin{proof}
	The first equation is immediate. Define $
	S_m=R_a(A; \underbrace{B,\dots,B}_{m \text{ times}}, 
	C)$ and note that
	\begin{align*}
	S_m&=\begin{cases}
	AS_m^{\sigma^a} + BS_{m-1}^{\sigma^a} & m\geq 1 \\
	AS_0^{\sigma^a} + C & m=0.
	\end{cases}
	\end{align*}
	Thus, $\sum S_m$ is the solution to the Frobenius equation $x=(A+B)x^{\sigma^a} + 
	C$, which proves \eqref{Frob eq additive formula 2}.
\end{proof}

\subsection{Growth of recursive Frobenius equations}
We begin by proving some basic properties of 
$\mathcal{O}_{\mathcal{E}_\infty}^{r,c}$, 
which will be used throughout the rest of the article. 

\begin{lemma} \label{dividing by p^s for rc growth}
	Let $r,c>0$ and let $\omega \in \mathcal{O}_L$ satisfy $v_q(\omega)=\frac{1}{r}$.
	\begin{enumerate}[label=(\roman*)]
		\item If $x \in  \mathcal{O}_{\mathcal{E}_\infty}^{r,c}$ then $\omega x \in  
		\mathcal{O}_{\mathcal{E}_\infty}^{r,q^{-1}c}$ and $x^\sigma \in 
		\mathcal{O}_{\mathcal{E}_\infty}^{r,qc}$. \label{mult rc eq1}
		\item If $y=\omega x \in 
		\mathcal{O}_{\mathcal{E}_\infty}^{r,c}$ and $w_0(x)\geq 0$, then $x \in 
		\mathcal{O}_{\mathcal{E}_\infty}^{r,qc}$ and $y^{1+\sigma+\dots+\sigma^n} \in 
		\mathcal{O}_{\mathcal{E}_\infty}^{r,c}$. \label{mult rc eq2}
		\item Let $x$ and $y$ be as in \ref{mult rc eq2} and let $z \in 
		\mathcal{O}_{\mathcal{E}_\infty}^{r,c}$. Then $yz^\sigma \in 
		\mathcal{O}_{\mathcal{E}_\infty}^{r,c}$. \label{mult rc eq3}
	\end{enumerate}
	
\end{lemma}
\begin{proof}
	Parts \ref{mult rc eq1}-\ref{mult rc eq2} follow from the definition of 
	$\mathcal{O}_{\mathcal{E}_\infty}^{r,c}$. To prove \ref{mult rc eq3}, note that
	$xz^\sigma \in \mathcal{O}_{\mathcal{E}_\infty}^{r,qc}$. 
\end{proof}

\begin{corollary} \label{rc growth recursive formula}
	Let $y=\omega x \in 
	\mathcal{O}_{\mathcal{E}_\infty}^{r,c}$ and assume that $w_0(x)\geq 0$.
	If $z \in \mathcal{O}_{\mathcal{E}_\infty}^{r,c}$, then $R(y,z)\in 
	\mathcal{O}_{\mathcal{E}_\infty}^{r,c}$.
\end{corollary}

\subsubsection{Approximating Frobenius equations} 
\label{subsubsection approximating frobenius equations}
For this section we fix $c>0$ and $c>c_1>0$. 
\begin{lemma} \label{lemma: existence of growth divisibility property}
	There exists $N_0>0$, depending only on $c$ and $c_1$,
	such that:
	\begin{enumerate}[label=(\roman*)]
		\item If $x,y \in \mathcal{O}_{\mathcal{E}_\infty}^{1,c}$ and
		$q^{N_0}| x,y$, then $xy^\sigma \in 
		\mathcal{O}_{\mathcal{E}_\infty}^{1,c_1}$.\label{lemma: 
			existence 
			of growth divisibility property eq1}
		\item Let $x \in \mathcal{O}_{\mathcal{E}_\infty}^{1,c}\cap q\mathcal{O}_{\mathcal{E}_\infty}$ and let $y \in 
		\mathcal{O}_{\mathcal{E}_\infty}^{1,c_1} \cap 
		q^{N_0}\mathcal{O}_{\mathcal{E}_\infty}$. Then $xy^\sigma \in 
		\mathcal{O}_{\mathcal{E}_\infty}^{1,c_1}\cap 
		q^{N_0}\mathcal{O}_{\mathcal{E}_\infty}$. 
		\label{lemma: 
			existence of growth divisibility property eq2}
	\end{enumerate}
\end{lemma}
\begin{proof} Let $x,y \in \mathcal{O}_{\mathcal{E}_\infty}^{1,c}$ and assume
	$q^{N_0}| x,y$. Write $x=q^{N_0-1}u$ and $y=q^{N_0-1}v$. By Lemma 
	\ref{dividing by p^s for rc growth}, we know
	$u \in \mathcal{O}_{\mathcal{E}_\infty}^{1,q^{N_0-1}c}$ and $v^\sigma \in 
	\mathcal{O}_{\mathcal{E}_\infty}^{1,q^{N_0}c}$. Thus, $uv^\sigma \in 
	\mathcal{O}_{\mathcal{E}_\infty}^{1,q^{N_0}c}$. 
	From Lemma \ref{dividing by p^s for rc growth} we see that
	$xy^\sigma \in \mathcal{O}_{\mathcal{E}_\infty}^{1,q^{-N_0+2}c}$. In particular,
	if $N_0$ is large enough \ref{lemma: existence of growth divisibility property 
		eq1} holds. The proof of \ref{lemma: 
		existence of growth divisibility property eq2} is similar.
\end{proof}

\noindent For the remainder of this subsection, we fix $N_0>0$ as in Lemma 
\ref{lemma: existence of growth divisibility property}.

\begin{lemma} \label{lemma: break up overconvergent things}
	Let $x \in \mathcal{O}_{\mathcal{E}^\dagger_\infty }\cap 
	\mathcal{O}_{\mathcal{E}_\infty}^{1,c}$. Then  for some $a\geq 0$ we may write
	$x=y+z$, where $y \in \mathcal{O}_{E_a}((T))\cap 
	\mathcal{O}_{\mathcal{E}_\infty}^{1,c}$ and $z \in 
	\mathcal{O}_{\mathcal{E}_\infty}^{1,c_1}\cap 
	q^{N_0}\mathcal{O}_{\mathcal{E}_\infty}$.
\end{lemma}

\begin{proof}
	Write $x=y+z$, where $z \in 
	\mathcal{O}_{\mathcal{E}_\infty}^{1,c_1}\cap 
	q^{N_0}\mathcal{O}_{\mathcal{E}_\infty}$ and $y$ is a Laurent series with a 
	finite pole. Since $\cup \mathcal{O}_{\mathcal{E}_n^\dagger}$ is dense in 
	$\mathcal{O}_{\mathcal{E}^\dagger_\infty }$, we may take $y$ to lie in $ 
	\mathcal{O}_{E_a}((T))$ for $a$ sufficiently large. Furthermore, we know $y\in 
	\mathcal{O}_{\mathcal{E}_\infty}^{1,c}$, as both $x$ and $z$ lie in 
	$\mathcal{O}_{\mathcal{E}_\infty}^{1,c}$.
\end{proof}

\begin{lemma} \label{lemma: effect of recursive on highly divisible r,d things}
	Let $A$ be a matrix such that $qA\in M_{d\times 
		d}(\mathcal{O}_{\mathcal{E}_\infty}^{r,c})$ and $w_0(A)\geq 0$. Let
	$C\in M_{d\times 1}(\mathcal{O}_{\mathcal{E}_\infty}^{1,c})$ and $Y_1$ (resp. 
	$Y_2$) be an $d\times d$ (resp. $d \times 1$) matrix with entries 
	in
	$\mathcal{O}_{\mathcal{E}_\infty}^{1,c_1}\cap 
	q^{N_0}\mathcal{O}_{\mathcal{E}_\infty}$. Then,
	\begin{align}
	R(qA+Y_1,C)&\equiv R(qA,C) \mod \mathcal{O}_{\mathcal{E}_\infty}^{1,c_1}\cap 
	q^{N_0}\mathcal{O}_{\mathcal{E}_\infty} \label{effect of recursive eq1}\\
	R(qA,Y_2+C) &\equiv R(qA,C) \mod \mathcal{O}_{\mathcal{E}_\infty}^{1,c_1}\cap 
	q^{N_0}\mathcal{O}_{\mathcal{E}_\infty}\label{effect of recursive eq2}.		
	\end{align}
	
\end{lemma}
\begin{proof}
	Let $Z_n=(qA)^{\frac{1-\sigma^n}{1-\sigma}} 
	Y_2^{\sigma^n} $. We have $Z_n\in 
	\mathcal{O}_{\mathcal{E}_\infty}^{1,c_1}\cap 
	q^{N_0}\mathcal{O}_{\mathcal{E}_\infty}$ by Lemma \ref{lemma: existence of growth 
		divisibility property} and the relation $Z_{n+1}=AZ_n^\sigma$.
	Then \eqref{effect of recursive eq2} follows from \eqref{frobenius solution 
		unraveled} and \eqref{Frob eq additive formula 1}. To prove \eqref{effect of 
		recursive 
		eq1}, by Lemma \ref{addition formula} 
	it is enough to show
	$R(qA; \underbrace{Y_1,\dots,Y_1}_{m \text{ times}}, 
	C)$ has entries in $\mathcal{O}_{\mathcal{E}_\infty}^{1,c_1}\cap 
	q^{N_0}\mathcal{O}_{\mathcal{E}_\infty}$ for $m\geq 1$.  This follows from 
	Lemma \ref{lemma: existence of growth divisibility property}.
\end{proof}

\subsection{Spaces of recursive Frobenius solutions}
\label{subsection: spaces of recursive frobenius solutions}
A \emph{tuple} $\lambda$ will be taken to mean a finite tuple of negative integers 
$\lambda=(m_1,\dots,m_r)$. We define $\mathbf{len}(\lambda)$ to be
$r$, i.e. the length of the tuple. 
For any $c \in \Z_{\geq 1}$ we let $c\lambda$ denote the tuple 
obtained by scalar multiplication. We say $\lambda $ is $q^a$-\emph{prime} if 
$q^a\nmid m_i$ for each $i\geq 1$. If $\mathbf{len}(\lambda)\geq 1$ we define
\begin{align*}
S_a(\lambda) &=R_a(q^a; T^{m_1},\dots, T^{m_r}).
\end{align*} 
and if $\mathbf{len}(\lambda)=0$ we set $S_a(\lambda)=1$. For $\mu_0,\mu_1 \in 
\Z_{<0}$ 
we have the following relations:
\begin{align}
S_a(\mu_0, \lambda) &= T^{\mu_0} \cdot S_a(q^{a}\lambda) + q^a S_a(q^a\mu_0,q^a 
\lambda) 
\label{S eq 1}, \\
S_a(\mu_0,\mu_1,\lambda) &= S_a(\mu_0+q^{a}\mu_1, q^a\lambda) + q^a 
S_a(\mu_0,q^a\mu_1,q^{a}\lambda), 
\label{S eq 2}
\end{align}
which follow from \eqref{recursive def}. Finally, we define the following 
$E_{a}[T^{-1}]$-modules:
\begin{align*}
\mathcal{N}_a &= \Big\{\sum_{i=1}^n b_i S_a(\lambda_i)~|~b_i \in 
E_a[T^{-1}]
\text{ and the }\lambda_i\text{ are tuples}\Big\}, \\
\mathcal{N}_a^{(p)} &= \Big\{\sum_{i=1}^n b_i S_a(\lambda_i)~|~b_i \in 
E_a[T^{-1}]
\text{ and the }\lambda_i\text{ are $q^a$-prime tuples}\Big\}. 
\end{align*}

\begin{lemma} \label{prime space is the same as nonprime space}
	We have $\mathcal{N}_a^{(p)} = 
	\mathcal{N}_a$.
\end{lemma}
\begin{proof}
	We proceed 
	by induction on $r=\mathbf{len}(\lambda)$. When $r=1$ the result follows from 
	\eqref{S eq 1}. Let $r\geq 1$ and assume the result holds for 
	all $k<r$. Write $\lambda 
	=(\mu_0,\mu_1,\lambda_0)$, where $\mathbf{len}(\lambda_0)=r-2$. By our inductive 
	hypothesis we may assume that 
	$(\mu_1,\lambda_0)$ is $q^a$-prime. If $q^a|\mu_0$, from \eqref{S eq 1} and 
	\eqref{S 
		eq 2} we obtain
	\begin{align*}
	S_a(\mu_0,\mu_1,\lambda_0) &= S_a(\mu_0+q^a\mu_1,q^a\lambda_0) - 
	T^{\mu_0}S_a(\mu_1,q^a\lambda_0) 
	+ S_a(\frac{\mu_0}{q^a},\mu_1,\lambda_0). 
	\end{align*}
	Both $S_a(\mu_0+q^a\mu_1,q^a\lambda_0)$ and $ T^{\mu_0}S_a(\mu_1,q^a\lambda_0)$ 
	are contained in 
	$\mathcal{N}_a^{(p)}$ by our inductive hypothesis, so it suffices to prove 
	$S_a(\frac{\mu_0}{q^a},\mu_1,\lambda_0)\in \mathcal{N}_a^{(p)}$. Repeating this 
	argument proves the lemma. 
\end{proof}

\begin{lemma} \label{lemma: recursive relation for powers of frobenius}
	Let $x \in \mathcal{E}_\infty$ and let $a,b$ be integers with $b=a\cdot c$. We 
	have
	\begin{align*}
	R_a(q;x) &= \sum\limits_{i=0}^{c-1} q^{ia} R_b(q^b;x^{\sigma^{ia}})
	\end{align*}
\end{lemma}

\begin{lemma} \label{lemma: recursive lands in Na}
	Let $C \in M_{d\times 1}(\mathcal{E}_\infty)$ have entries in $\mathcal{N}_a$ and
	let $B\in T^{-1}M_{d\times d}(E_a[T^{-1}])$.
	Then $R(q1_d;BC)$ has entries in $\mathcal{N}_a$.
\end{lemma}
\begin{corollary} \label{corollary: recursive solutions with finite pole are in Ma}
	Let $B_1,\dots, B_m \in T^{-1} M_{d\times d}(E_a[T^{-1}])$ and 
	$C \in T^{-1}M_{d\times 1}(E_a[T^{-1}])$. Then we have
	$R(q1_d;B_1,\dots,B_m,C)$ has entries in $\mathcal{N}_a$.
\end{corollary}

\begin{proposition}\label{main $r$-recursive Prop}
	Let $A \in q1_d+q^2T^{-1}M_{d\times 
		d}(\mathcal{O}_{\mathcal{E}_{\infty}^{\dagger,-}})$ and $C \in 
	qT^{-1}M_{d \times 1}(\mathcal{O}_{\mathcal{E}_{\infty}^{\dagger,-}} )$. 
	Let $c_1>0$ and let $N_0$ be a positive integer.
	Then for $a$ sufficiently large, there exists $Z_{c_1}\in M_{d\times 
		1}(\mathcal{E}_\infty)$ with entries in $\mathcal{N}_a^{(p)}$ such that
	\begin{align*}
	R(qA,C) &\equiv Z_{c_1} \mod \mathcal{O}_{\mathcal{E}_\infty}^{1,c_1}\cap q^{N_0} 
	\mathcal{O}_{\mathcal{E}_\infty}.
	\end{align*}

\end{proposition}

\begin{proof}
	Write $A$ as $q1_d + q^2 A_1$. There exists $c>c_1$ 
	such that
	$qA_1$ and $C$ have entries in $\mathcal{O}_{\mathcal{E}_\infty}^{1,c}$. After 
	increasing $N_0$, we 
	may assume that the results 
	of \S \ref{subsubsection approximating frobenius equations} hold.
	By Lemma \ref{lemma: break up overconvergent things} 
	and Lemma \ref{lemma: effect of recursive on highly divisible r,d things}, 
	we may also assume the entries of $qA_1$ and $C$ lie in 
	$T^{-1}\mathcal{O}_{E_a}[T^{-1}]\cap \mathcal{O}_{\mathcal{E}_\infty}^{1,c}$.
	Then by Corollary \ref{rc growth recursive formula}, we know that
	\begin{align*}
	R(q1_d; \underbrace{qA_1,\dots,qA_1}_{m \text{ times}},C)
	\end{align*}
	is contained in $M_{d\times 1}(\mathcal{O}_{\mathcal{E}_\infty}^{1,c})$ for all 
	$m$.
	Thus, for $m_0$ sufficiently large, we know that
	\begin{align*}
	R(q1_d; \underbrace{q^2A_1,\dots,q^2A_1}_{m \text{ times}},C) &= q^mR(q1_d; 
	\underbrace{qA_1,\dots,qA_1}_{m \text{ times}},C)
	\end{align*}
	has entries in $\mathcal{O}_{\mathcal{E}_\infty}^{1,c_1}\cap q^{N_0} 
	\mathcal{O}_{\mathcal{E}_\infty}$ for all $m>m_0$. From Lemma \ref{addition 
		formula} we obtain
	\begin{align*}
	R(A,C) &\equiv  \sum_{m=0}^{m_0} R(q1_d; \underbrace{q^2A_1,\dots,q^2A_1}_{m 
		\text{ times}},C_0) \mod \mathcal{O}_{\mathcal{E}_\infty}^{1,c_1}\cap q^{N_0} 
	\mathcal{O}_{\mathcal{E}_\infty}.
	\end{align*}
	The right side of this equivalence has entries in $\mathcal{N}_a^{(p)}$
	by Lemma \ref{prime space is the same as nonprime space}
	and Corollary \ref{corollary: recursive solutions with finite pole are in Ma}.
\end{proof}

\subsection{Stable growth for solutions of Frobenius equations}
\label{subsection: stable growth for solutions of Frobenius equations}
In this subsection, we study the growth of certain solutions to recursive Frobenius 
equations.

\begin{definition} \label{a-growth definition}
	Let $S$ be a finite subset of $\mathcal{O}_{\mathcal{E}_\infty}$. Let $a$ be a 
	positive integer and let $s=v_p(q^a)$. We say that $S$ has \emph{$a$-stable 
		growth} if 
	for any $k \in [0,s]$, there exists $m_k$ and $b_k$ such that
	\begin{align*}
	\min_{x \in S}( w_{k+sn}(x))&=m_kq^{an} + b_k 
	\end{align*}
	for $n\gg 0$. We say that $S$ has \emph{stable growth} if $S$ has $a$-stable 
	growth for some $a$. 
\end{definition}

\begin{lemma} \label{union of stable growth}
	For $i=1,2$, let $S_i \subset\mathcal{O}_{\mathcal{E}_\infty}$ have $a_i$ stable 
	growth. 
	Then $S_1 \cup S_2$ has $\lcm(a_1,a_2)$-stable growth. 
\end{lemma}
\begin{proof}
	Observe that a set with $a$-stable growth has $ad$-stable 
	growth for $d\in \Z_{\geq 1}$.
\end{proof}
\noindent To state our main result of this subsection, we define the 
following spaces:
\begin{align*}
\begin{split} 
\mathcal{M}_a &= \mathcal{N}_a \otimes_{E_a[T^{-1}]} 
E_a((T)) \\
\mathcal{M} &= \bigcup_{a=1}^\infty \mathcal{M}_a.
\end{split}
\end{align*}

\begin{proposition} \label{proposition: main recursive growth result}
	Let $S \subset  \mathcal{M}$ and assume there exists $c_1>1$ such that
	\[\min_{x \in S} \{w_k(x)\} \leq -c_1 p^k\]
	for all $k\geq 0$. Then $S$ has stable growth.
\end{proposition}

\noindent The proof of Proposition \ref{proposition: main recursive growth result} 
will be 
broken into several lemmas.

\begin{lemma} \label{lemma: recursive growth condition}
	Let $a\in \Z_{\geq 1}$ and $s=v_p(q^a)$.
	Fix $x \in 
	\mathcal{O}_{\mathcal{E}_\infty}$ and $k\geq 0$. Assume that
	$q^a\nmid w_k(x)$ and that
	\begin{align}\label{recursive growth condition eq1}
	w_{k+sn}(x) &> q^{an}w_k(x)
	\end{align}
	for $n\in \Z_{\geq 1}$. Then for $n\in \Z_{\geq 0}$ we have
	\begin{align*}
	w_{k+sn}(R_a(q^a,x))&=q^{an}w_k(R_a(q^a,x)).
	\end{align*}
\end{lemma}
\begin{proof}
	We proceed by induction on $n$. The case where $n=0$. Set $n\geq 
	1$. Let $z=R_a(q^a,x)$, so that $z=q^az^{\sigma^a} + 
	x$. Note that $w_k(z)\leq w_k(x)$, since $q^a \nmid w_{k}(x)$ and 
	$q^a|w_k(q^az^{\sigma^a})$.
	We have
	\begin{align*}
	w_{k+sn}(z) &\geq \min(w_{k+sn}(q^az^{\sigma^a}), w_{k+sn}(x)) \\
	&= \min (q^aw_{k+s(n-1)}(z),w_{k+sn}(x)) \\
	&= \min(q^{an}w_k(z),w_{k+sn}(x)).
	\end{align*}
	By 
	\eqref{recursive growth condition eq1} we know that $q^{an} w_k(z)< 
	w_{k+sn}(x)$, which proves the result.
\end{proof}

\begin{lemma} \label{lemma: q-prime r-recursive}
	Let $a$ and $s$ be as in Lemma \ref{lemma: recursive growth condition}. Let 
	$\beta_1,\dots, \beta_r \in 
	\mathcal{O}_{E_a}$ and let $\Lambda=\{\lambda_1, \dots, \lambda_r\}$ be a set of 
	$q^a$-prime tuples. We set $x=\sum \beta_i S_a(\lambda_i)$.
	Then for any $k \in [0,s]$, there exists $m_k$ such that 
	\begin{align} \label{q-prime r-recursive eq}
	w_{k+sn}(x) &= m_kq^{an}
	\end{align}
	for $n$ sufficiently large.
\end{lemma}
\begin{proof}
	Define $\mathbf{L}(\Lambda)$ to be $\max\{{\mathbf{len}(\lambda_i)}\}$. We 
	proceed by induction on $\mathbf{L}(\Lambda)$. When
	$\mathbf{L}(\Lambda)$ is $1$, then   
	$x=R_a(q^a,y)$
	with $y \in \mathcal{O}_{E_a}[T^{-1}] \cap \mathcal{E}_\infty^{(q^a)}$ and the 
	lemma 
	follows from Lemma \ref{lemma: recursive growth condition}. 
	Now assume the proposition holds for all collections of $q^a$-prime tuples 
	$\Lambda'$ such that $\mathbf{L}(\Lambda')\leq m$. Let 
	$\Lambda=\{\lambda_1,\dots, \lambda_r\}$ be a collection of $q^a$-prime tuples 
	with $\mathbf{L}(\Lambda)=m+1$. There exist tuples $\Lambda'=\{\lambda_1',\dots, 
	\lambda_r'\}$ and negative integers $\{\kappa_1, \dots, \kappa_r\}$ 
	such that $\mathbf{L}(\Lambda')=m$ and $\lambda_i=(\kappa_i, 
	\lambda_i')$. For each $d<0$ we define 
	\begin{align*}
	y_d &= \begin{cases}
	0 & \text{ if }d \neq \kappa_i \text{ for every }i \\
	\sum\limits_{\kappa_i=d} \beta_iS_a(\lambda_i') & \text{ otherwise}
	\end{cases}.
	\end{align*}
	Let $d_{1},\dots, d_{t}$ be the values of $d$ for which $y_{d}\neq 0$.
	Then we have
	\begin{align*}
	x&= \sum_{i=1}^r R_a(q^a;\beta_iT^{\kappa_i}S_a(\lambda_i')) \\
	&= \sum_{j=1}^t R_a(q^a;T^{{d_j}}y_{d_j}).
	\end{align*}
	By our inductive assumption, there exists $m_{j,k}$ such that 
	$w_{k+sn}(y_{d_j})=q^{an}m_{j,k}$ 
	for $n\gg 0$. Without loss of generality we may assume $d_1 + q^{an} m_{1,k} < 
	\min\limits_{j\geq 2} \{d_j + q^{an} m_{j,k} \}$
	for $n \gg 0$. This holds because the $d_j$ are distinct. Thus,
	$w_{k+sn}(x) = d_1 + q^{an} m_{1,k}$,
	for large $n$. The proposition follows from Lemma \ref{lemma: 
		recursive growth condition}.
\end{proof}

\begin{proof}[Proof of Proposition \ref{proposition: main recursive growth result}]
	By Lemma \ref{union of stable growth}, it is enough to prove the proposition
	for $S=\{x\} \subset \mathcal{M}_a$. After multiplying $x$ by a 
	large power of $qT$ we may write 
	$x=\sum_{i=1}^r b_i S_a(\lambda_i)$, where $b_i \in \mathcal{O}_{E_a}[[T]]$ and 
	$\lambda_i$ are $q^a$-prime. Let
	$\mathcal{A} \subset \mathcal{O}_{\mathcal{E}_a}$ be the 
	$\mathcal{O}_{E_a}$-module 
	generated by $S_a(\lambda_1),\dots,S_a(\lambda_r)$. For $N$ 
	sufficiently large, we know $p^N\mathcal{A} \subset 
	\mathcal{O}_{\mathcal{E}_a}^{1,c_1}$. Let $X=\{x_1,\dots,x_t\} \subset 
	\mathcal{A}$ 
	be a system 
	of representatives of $\mathcal{A} / p^N \mathcal{A}$. Then
	\begin{align} 
	x &= \sum_{i=0}^\infty y_i T^i \notag \\
	&\equiv \sum_{i=0}^\infty z_i T^i \mod 
	\mathcal{O}_{\mathcal{E}_a}^{1,c_1},\label{representatives with good growth}
	\end{align}
	where $y_i \in \mathcal{A}$ and $z_i \in X$ with $y_i \equiv 
	z_i \mod p^N\mathcal{A}$. After reorganizing \eqref{representatives with good 
		growth}, we obtain
	\begin{align*}
	x &\equiv \sum_{j=1}^t x_jf_j(T) \mod \mathcal{O}_{\mathcal{E}}^{1,c_1},
	\end{align*}
	where $f_j(T)\in T^{d_j} + 
	T^{d_j+1}\mathcal{O}_{E_a}[[T]]$ and the $d_j$ are distinct. By Lemma \ref{lemma: 
		q-prime r-recursive}, 
	there 
	exists $m_{j,k}$ such that for $n$ large we have
	\begin{align*} 
	w_{k+sn}(x_jf_j(T)) = d_j+ m_{j,k} q^{an}.
	\end{align*}
	For $n$ sufficiently large, the values $d_j+ m_{j,k} q^{an} $ are distinct.
	Without loss of generality we may
	assume that $d_{1}+ m_{1,k}q^{an}< \min\limits_{j\geq 2} \{d_j+ m_{j,k} 
	q^{an}\}$, 
	which 
	implies 
	$w_{k+sn}(x) = d_{1}+ m_{1,k}q^{an}$ for $n$ large.
\end{proof}

\section{Growth properties of the slope filtration} 
\label{section: solving the unit-root subspace}
\subsection{Local setup} \label{subsection: local setup}
Let $M$ be a rank $n$ object of $\Fisoc^\dagger(F)\otimes L$ with
slope filtration:
\begin{align} \label{slope filtration written out}
0=M_0 \subset M_1 \subset M_2 \subset ... \subset 
M_d=\iota^\dagger(M),
\end{align}
where each graded piece $gr_i(M)=M_{i}/M_{i-1}$ is isoclinic of
slope $\alpha_i$ and has rank $n_i$ (see \S \ref{subsection: slope filtrations}). 
After replacing $L$ with
a finite ramified extension, we may assume there exists $\omega_i \in L$
with $v_q(\omega_i)=\alpha_i$. 
From \eqref{slope filtration written out}, we know that there is a Frobenius 
matrix $A_0$ of $\iota^\dagger(M)$ of the form
\begin{align} \label{matrix eq with filtration}
A_0 &= \begin{pmatrix}
\omega_1 A_1 & * &  *&\dots & * \\
0 & \omega_2 A_{2} &  *& \dots & * \\
0&0& \omega_3 A_3 & \dots & *\\
\vdots & \vdots & \vdots&\ddots & \vdots \\
0 & 0  & 0& \dots & \omega_{d} A_d
\end{pmatrix}
\end{align}
where $\omega_i A_i$ is the Frobenius structure of $gr_i(M)$. Since $gr_i(M) \otimes 
L(\omega_i)$ 
is unit-root, we may assume that $A_i$ has entries in $\mathcal{O}_{\mathcal{E}}$. 
This follows from the construction of the Frobenius structure from the corresponding 
Galois representation (see, e.g., \cite[\S 4]{Katz-p-adic_properties}).

\subsection{The shape of the Frobenius structure of $M$}

\subsubsection{Logarithmic growth of the slope filtration}
We now show that $M_1$ has a Frobenius matrix with log-decay entries. The rate of 
log-decay depends on the difference of the first two slopes.
\begin{lemma} \label{lemma: log growth and slope filtrations}
	Let $A$ be a Frobenius matrix of $M$. Let $N > \alpha_d$ and let 
	$r=\frac{1}{\alpha_2-\alpha_1}$. There exists 
	$C \in M_{n\times n}(\mathcal{O}_{\mathcal{E}^r})$ such that $CAC^{-\sigma}$ is 
	of 
	the form $\begin{pmatrix}
	\omega_1 A_{1,1} & q^N A_{1,2} \\ 
	0 & \omega_2 A_{2,2}
	\end{pmatrix}$,
	where $A_{1,1}, A_{1,2}$ and,$A_{2,2}$ have entries in 
	$\mathcal{O}_{\mathcal{E}^r}$ and $A_{1,1}$ is an $n_1\times n_1$ matrix.
\end{lemma}

\begin{proof}
	After replacing $M$ with $M \otimes L(\omega_1)$, we may assume 
	$\alpha_1=0$ and $\omega_1=1$. Consider the Frobenius matrix $A_0$ of 
	$\iota^\dagger(M)$ from \eqref{matrix eq with filtration}. We may conjugate $A_0$ 
	by a matrix with powers of $q$ along the diagonal, so that each $*$ in the 
	upper-right is divisible by $q^N$. Next, we skew-conjugate \eqref{matrix eq with 
		filtration} by a matrix with powers of $T$ along the diagonal so that 
	$w_0(A_1^{-1})\geq 0$. Let $A$ be a Frobenius matrix of $M$. There exists $B 
	\in 
	GL_n(\mathcal{O}_{\mathcal{E}})$ such that $BAB^{-\sigma} = A_0$. By 
	approximating $B$ with some $B_1$ in $GL_n(\mathcal{O}_{\mathcal{E}^\dagger})$
	and skew-conjugating $A$,
	we may assume that
	\begin{align} \label{simple frobenius congruence equation}
	A &= \begin{pmatrix} A_{1,1} & q^NA_{1,2} \\ q^N A_{2,1} & 
	\omega_2 A_{2,2} \end{pmatrix},
	\end{align}
	where $A_{1,1} \equiv A_1 \mod q^N$. For $c$ sufficiently large,
	the entries of $A_{1,1}^{-1}, q^NA_{1,2}, q^N A_{2,1} $, and 
	$\omega_2 A_{2,2}$ are contained in
	$\mathcal{O}_{\mathcal{E}}^{r,c}$. 
	We will
	show inductively that there exists $C_k=\begin{pmatrix} 1_{n_1} & 0 \\ 
	C_{2,1,k} & 1_{n-n_1}  \end{pmatrix}$ such that:
	\begin{enumerate} [label=(\roman*)]
		\item $A_k=C_k A C_k^{-\sigma}$ is of the form
		$\begin{pmatrix} A_{1,1,k} & q^N A_{1,2} \\ \omega_2^k A_{2,1,k} & 
		\omega_2 A_{2,2,k} \end{pmatrix}$. \label{matrix condition 1 for unit-root}
		\item  The entries of $\omega_2^k A_{2,1,k}$, $\omega_2 A_{2,2,k}$, and 
		$A_{1,1,k}^{-1}$ are contained in 
		$\mathcal{O}_{\mathcal{E}}^{r,c}$. \label{matrix condition 2 for unit-root}
		\item For all $k$ we have $C_k \equiv C_{k-1} \mod \omega_2^k$. 
		\label{matrix condition 4 for unit-root}
	\end{enumerate}
	The result will follow by taking $C=\lim\limits_{k \to \infty} C_k$.
	When $k=1$ this follows from \eqref{simple frobenius congruence equation}.
	Now let $k\geq 1$ and assume $C_{k}$ exists. We define
	$D_k = \begin{pmatrix} 1 & 0 \\ -A_{1,1,k}^{-1}\omega_2^k A_{2,1,k} & 1 
	\end{pmatrix}$
	and set $C_{k+1}=D_kC_k$. It is immediate that
	\ref{matrix condition 1 for unit-root} and \ref{matrix 
		condition 4 for unit-root} are satisfied. We verify \ref{matrix 
		condition 2 for unit-root}
	using Lemma \ref{dividing by p^s for rc growth}.
\end{proof}

\subsubsection{Approximating the Frobenius structure by diagonal matrices}

\begin{lemma} \label{lemma: diagonalization mod p to the N}
	Let $N>\alpha_d$. After replacing $F$ with a finite extension, we may assume that 
	$M$ has a Frobenius matrix of the form
	\begin{align} \label{Frobenius structure congruence}
	A &\equiv \begin{pmatrix}
	\omega_1 1_{n_1} & 0 &0 & \dots & 0 \\
	0 & \omega_2 1_{n_2} & 0 & \dots & 0 \\
	\vdots & \vdots & \ddots & \vdots & \vdots \\
	0 & 0 & 0 & \dots & \omega_{d} 1_{r_d}. 
	\end{pmatrix}  \mod q^N
	\end{align}
\end{lemma}

\begin{proof}
	Let $\rho_i:G_F \to GL_{n_i}(\mathcal{O}_L)$ denote the 
	representation corresponding
	to $gr_i(M) \otimes L(\omega_i)$. After replacing $F$ with a finite separable 
	extension, we may assume that $\rho_i(G_F) \subset 1_{n_i} + q^N M_{n_i 
		\times n_i}(\mathcal{O}_L)$ for each $i$. In particular, there exists a 
	Frobenius matrix $A_i$ of $gr_i(M) \otimes L(\omega_i)$ such that
	\begin{align} \label{isoclinic pieces}
	A_i \equiv 1_{n_i} \mod q^N. 
	\end{align}
	As in the proof of Lemma \ref{lemma: log growth and slope filtrations}, we may 
	assume the 
	$*$'s in \eqref{matrix eq with filtration} are divisible by 
	$q^N$. Then
	\eqref{isoclinic pieces} gives 
	\begin{align*}
	A_0 &\equiv \begin{pmatrix}
	\omega_1 1_{n_1} & 0 &0 & \dots & 0 \\
	0 & \omega_2 1_{n_2} & 0 & \dots & 0 \\
	\vdots & \vdots & \ddots & \vdots & \vdots \\
	0 & 0 & 0 & \dots & \omega_{d} 1_{r_d}
	\end{pmatrix}  \mod q^N.
	\end{align*}
	Let $A$ be a Frobenius matrix of $M$. There exists a matrix $B \in 
	GL_n(\mathcal{E})$ such that $B A B^{-\sigma} = A_0$. The lemma follows by 
	taking $B_0\in GL_n(\mathcal{E}^\dagger)$ whose entries are sufficiently 
	close to $B$. 
\end{proof}

\subsubsection{The case of integer slopes} \label{subsubsection: integer slopes}
We now assume that $\alpha_i=i-1$ and fix $N>\alpha_d$. Assume that $M$ has a 
Frobenius matrix as in Lemma 
\ref{lemma: diagonalization mod p to 
	the N}. We may write:
\begin{align} \label{frobenius matrix 2}
A &= \begin{pmatrix}
A_{1,1 } & q^N A_{1,2}  & q^N A_{1,3} & \dots & q^N A_{1,d} \\
q^N A_{2,1} & qA_{2,2} & q^N A_{2,3} & \dots & q^N A_{2,d} \\
q^N A_{3,1} & q^N A_{3,2} & q^2A_{3,3}  & \dots & q^N A_{3,d} \\
\vdots & \vdots & \vdots & \ddots & \vdots \\
q^N A_{d,1} & q^NA_{d,2} & q^N A_{d,3} & \dots & q^{d-1} A_{d,d}
\end{pmatrix},
\end{align}
where $A_{i,j} \in M_{n_i\times n_j}(\mathcal{O}_{\mathcal{E}^\dagger})$ and 
$q^{i-1}A_{i,i}\equiv q^{i-1} 1_{n_i} \mod q^N$. In particular, there exists $c>0$
such that $A$ and $A_{1,1}^{-1}$ have entries in $\mathcal{O}_{\mathcal{E}}^{1,c}$. 
\begin{proposition} \label{proposition: fancy frob matrix proposition}
	Let $c>c_1>0$ and let $N_0>N>\alpha_d$. There is a Frobenius matrix $A$ of 
	$\eta^* M$ 
	satisfying:

	\begin{enumerate} [label=(\roman*)]
		\item The congruence \eqref{Frobenius structure congruence} holds and
		$A$ has entries in
		$\mathcal{O}_{\mathcal{E}_\infty}^{1,c}\cap 
		\mathcal{O}_{\mathcal{E}_\infty^\dagger}$.\label{matrix condition 0}
		\item (Bottom left) For any $i>j$, the matrix $q^NA_{i,j}$ is divisible 
		by $q^{N_0}$. \label{matrix condition 1}
		\item (First column) For $i>2$, the matrix $q^NA_{i,1}$ has entries in 
		$\mathcal{O}_{\mathcal{E}_\infty}^{1,c_1}$. 
		\label{matrix condition 2}
		\item (First row) For $j\geq 1$, the matrix $A_{1,j}$ has entries in 
		$\mathcal{O}_{\mathcal{E}_\infty}^{(q)}$. \label{matrix condition 4}
		\item  The matrix $A_{2,1}$ has entries $T^{-1} 
		\mathcal{O}_{\mathcal{E}_\infty^{\dagger,-}}$. The
		matrix $A_{i,i}$ lies in $1_{n_i} + qT^{-1} M_{n_i \times 
			n_i}(\mathcal{O}_{\mathcal{E}_\infty^{\dagger,-}})$ for
		$i=1,2$.
		\label{matrix condition 3}
		
	\end{enumerate}
\end{proposition}
\begin{proof}
	The Frobenius matrix \eqref{frobenius matrix 2}
	already satisfies \ref{matrix condition 0}. We prove
	\ref{matrix condition 1}-\ref{matrix condition 3} in two steps. We must ensure 
	that the second step does not undo
	the first step. To this end, we increase $N_0$ so that Lemma \ref{lemma: 
		existence of growth 
		divisibility property} is satisfied. 
	\begin{step}
		\normalfont  Consider the matrix 
		\begin{align*}
		B &= \begin{pmatrix}
		1_{n_1} & 0 & 0&\dots & ~0  \\
		-q^NA_{2,1} & 1_{n_2}&0&\dots & ~0 \\
		-q^N A_{3,1} & -q^N A_{3,2} & 1_{n_3} & \dots& ~0 \\
		\vdots & \vdots & \vdots & \ddots & ~\vdots \\
		-q^NA_{d,1} & -q^NA_{d,2} & -q^N A_{d,3} & \dots  & ~1_{n_d}
		\end{pmatrix}.
		\end{align*}
		By Lemma \ref{dividing by p^s for rc growth}, we know $BA B^{-\sigma}$ has 
		entries in 
		$\mathcal{O}_{\mathcal{E}}^{1,c}$. Also, we have 
		\begin{align*}
		B A B^{-\sigma} &\equiv \begin{pmatrix}
		A_{1,1} & q^N * & q^N*& \dots & q^N * \\
		0 & qA_{2,2} & q^N*& \dots & q^N *\\
		0 & 0 & q^2A_{3,3} & \dots & q^N * \\ 
		\vdots & \vdots &  \vdots & \ddots &\vdots \\
		0 & 0 & 0 & \dots & q^{d} A_{d,d}
		\end{pmatrix} \mod q^{N+1}.
		\end{align*}
		Furthermore, from Lemma \ref{dividing by p^s for rc growth} we
		see that the $(j,1)$-block matrix has entries in 
		$\mathcal{O}_{\mathcal{E}}^{1,q^{-1}c}$
		for $j>2$. 
		After repeating this finitely many times we obtain a Frobenius matrix 
		satisfying \ref{matrix condition 0}-\ref{matrix condition 2}.
	\end{step}
	
	%
	
	\begin{step} 	\normalfont
		By the previous step, we may assume \ref{matrix condition 
			0}-\ref{matrix condition 2} hold. 
		Let $m>0$ be large enough so that $A$ has entries in 
		$\mathcal{O}_{\mathcal{E}_\infty}^{\dagger,m}$ (see \S \ref{subsection: aux 
			rings}). Assume that $A$ satisfies properties \ref{matrix 
			condition 4} and \ref{matrix condition 3}
		modulo $q^v$. We will find a matrix $B$ such that:
		\begin{itemize}
			\item $B^\sigma \in M_{n\times 
				n}(\mathcal{O}_{\mathcal{E}_\infty}^{\dagger,m})$ and $B  
			\equiv 1_n \mod q^v$.
			\item $B A B^{-\sigma}$ satisfies properties \ref{matrix condition 
				0}-\ref{matrix condition 2} and properties \ref{matrix condition 4}-\ref{matrix condition 3}
			modulo $q^{v+1}$.
		\end{itemize}
		The proposition will follow by taking the limit. This limit exists
		because
		$\mathcal{O}_{\mathcal{E}_\infty}^{\dagger,m}$ is $p$-adically closed.
		
		For $j>1$ (resp. $j=1$) we 
		write $q^N A_{1,j}=C_j + D_j^\sigma $
		(resp. $A_{1,1} = 1_{d_1} + C_1 + D_1^{\sigma}$), where $C_j$ 
		has 
		entries in $\mathcal{O}_{\mathcal{E}}^{(q)}$. 
		We have $v\geq v_q(D_j)$ for each $j$ by our inductive assumption. 
		Consider
		the matrix
		\begin{align*}
		B_0&= \begin{pmatrix} 
		1_{n_1} -D_1& -D_2  & \dots  & -D_d \\
		0 & 1_{n_2} &  \dots & 0 \\
		\vdots & \vdots & \ddots  & \vdots \\
		0 & 0  & \dots & 1_{n_d}
		\end{pmatrix}.
		\end{align*}
		Note that $B_0 A B_0^{-\sigma}$ satisfies \ref{matrix condition 
			0}-\ref{matrix condition 2} (we are using Lemma \ref{lemma: existence of 
			growth divisibility property} to verify \ref{matrix condition 2}).
		Furthermore, $B_0^{-\sigma}\in M_{n\times 
			n}(\mathcal{O}_{\mathcal{E}_\infty}^{\dagger,m})$, since each
		$D_j^\sigma$ has entries in $\mathcal{O}_{\mathcal{E}_\infty}^{\dagger,m}$.
		The top row of $B_0 A B_0^{-\sigma}$ is equivalent to
		$\begin{bmatrix} 1+C_1 & C_2 & \dots &C_d\end{bmatrix}$
		modulo $q^{v+1}$.

		By the previous paragraph, we may assume 
		$A$ satisfies \ref{matrix condition 0}-\ref{matrix condition 2}
		and \ref{matrix condition 4} modulo $q^{v+1}$.
		Write $q^NA_{2,1} = X_{2,1} + Y_{2,1}$ with $X_{2,1}\in 
		T^{-1} \mathcal{O}_{\mathcal{E}_\infty^{\dagger,-}}$ and $Y_{2,1} \in 
		\mathcal{O}_E[[T]] \otimes \mathcal{O}_{E_{\infty}}$. 
		Similarly, we write $A_{i,i}= X_{i,i} + Y_{i,i}$ with
		$X_{i,i}\in 
		T^{-1} \mathcal{O}_{\mathcal{E}_\infty^{\dagger,-}}$ and $Y_{i,i} \in 
		\mathcal{O}_E[[T]] \otimes \mathcal{O}_{E_{\infty}}$ for $i=1,2$. There exists
		$Z_i$ with entries in 
		$\mathcal{O}_E[[T]] \otimes \mathcal{O}_{E_{\infty}}$ such that 
		$Z_i^\sigma - Z_i = Y_{i,i}$ (note that this is only true after tensoring by 
		$\mathcal{O}_{E_{\infty}}$). Consider the matrix 
		\begin{align*}
		B_1&= \begin{pmatrix}
		1_{n_1} +Z_1 & 0 & 0 & \dots & 0 \\
		-Y_{2,1} & 1_{n_2}+Z_2& 0 & \dots & 0 \\
		0 & 0 & 1_{n_3}&  \dots & 0 \\
		\vdots & \vdots &\vdots & \ddots  & \vdots \\
		0 & 0 & 0 & \dots & 1_{n_d}
		\end{pmatrix}.
		\end{align*}
		The matrix $B_1AB_1^{-\sigma}$ still satisfies \ref{matrix condition 4}
		modulo $q^{v+1}$ and \ref{matrix condition 0}-\ref{matrix condition 2} 
		(again, we use Lemma \ref{lemma: existence of growth divisibility property} 
		to 
		verify \ref{matrix condition 2}). Furthermore,
		we see that $B_1AB_1^{-\sigma}$ satisfies \ref{matrix condition 3} modulo 
		$q^{v+1}$
		by construction. 
	\end{step}
\end{proof}

\begin{remark}
	In the proof Proposition \ref{proposition: fancy frob matrix proposition},
	it was sufficient to remain in 
	$\mathcal{O}_{\mathcal{E}}^{1,c}$ for properties \ref{matrix condition 
		0}-\ref{matrix condition 4}. It is only necessary to go to the geometric 
		fiber 
	$\eta^* M$ for property \ref{matrix condition 3}.
\end{remark}

\subsection{The Frobenius structure of the unit-root subcrystal}
We will continue with the setup from the beginning of \S \ref{subsubsection: integer 
	slopes}, with 
the additional assumption that $M^{unit}$ has rank one (see 
\S \ref{subsection: slope filtrations}). This subsection is dedicated to proving the 
following proposition:
\begin{proposition}\label{min frob prop for unit root}
	There exists a maximal Frobenius $\lambda \in 
	\mathcal{O}_{\mathcal{E}_\infty}$ of $\eta^* M^{unit}$ such that the following is 
	satisfied: for any $c_1>0$,
	there exists $\lambda_{c_1} \in \mathcal{M}$ with
	\begin{align*}
	\lambda &\equiv \lambda_{c_1} \mod \mathcal{O}_{\mathcal{E}_\infty }^{1,{c_1}}.
	\end{align*}
\end{proposition}

Let $N_0$ be sufficiently large so that the results of \S 
\ref{subsubsection approximating frobenius equations} hold. Let $A$ be a Frobenius 
matrix of $\eta^* M$ satisfying the properties in 
Proposition \ref{proposition: fancy frob matrix proposition}
and write $A$ as a block matrix as in \eqref{frobenius matrix 2}. Let $\mathbf{e} 
=(e_1, \dots, e_n)$ be the basis of $\eta^* M$
such that $\varphi(\mathbf{e}^T)=A\mathbf{e}^T$ and let $u \in \eta^*M^{unit}$.
After normalizing $u$ we have
\begin{align*}
u &=  e_1 + \epsilon_2 e_2 + \dots + \epsilon_n e_n,
\end{align*}
where $\epsilon_i \in \mathcal{E}_\infty$.
The Frobenius structure of $\eta^* M^{unit}$ is given by $\lambda\in 
\mathcal{E}_\infty$ satisfying
$\varphi(u)=\lambda u$. 
Write $a_{i,j}$ for the $(i,j)$-th entry of $A$ and let $b_{i,j} = 
\frac{a_{i,j}}{a_{1,1}}$. We obtain the following equations:
\begin{align}
\lambda &= a_{1,1}+ a_{1,2}\epsilon_2^\sigma+ \dots + 
a_{1,n}\epsilon_n^\sigma, \label{main Frob eq: 1} \\
\epsilon_i &= b_{i,1}+ b_{i,2}\epsilon_2^\sigma + \dots 
b_{i,n}\epsilon_n^\sigma-(b_{1,2}\epsilon_2^\sigma\epsilon_i+ \dots + 
b_{1,n}\epsilon_n^\sigma\epsilon_i). \notag
\end{align}
By Lemma \ref{proposition: fancy frob matrix proposition}-\ref{matrix condition 4},
we know $\lambda \in \mathcal{O}_{\mathcal{E}_\infty}^{(q)}$ and by
Lemma \ref{lemma: non divisble are maximal} we know $\lambda$ is maximal. 
Define the matrices
\[ B 
= \begin{bmatrix} b_{2,2} & \dots & b_{2,n} \\ 
\vdots & \ddots & \vdots \\
b_{n,2} & \dots & b_{n,n} \end{bmatrix} ~\text{ and}~~ \mathbf{x}=\begin{bmatrix} x_2 
& \dots & 
x_n \end{bmatrix},\]
where $x_i = b_{i,1} 
- \sum\limits_{j=2}^n b_{1,j}\epsilon_j^\sigma \epsilon_i$.
It will be convenient to write $B$ as the block matrix 
$\begin{bmatrix}
B_{1,1} & B_{1,2} \\
B_{2,1} & B_{2,2},
\end{bmatrix} $, where
$B_{1,1} = a_{1,1}^{-1} A_{2,2}$.
The vector $\mathbf{\epsilon}=\begin{bmatrix} \epsilon_2 & \dots & 
\epsilon_n \end{bmatrix}$
satisfies the recursive equation
\begin{align} \label{unit-root recursive equation}
\mathbf{\epsilon}^T &= B\mathbf{\epsilon}^{\sigma T} + \mathbf{x}^T.
\end{align}
From Proposition \ref{proposition: fancy frob matrix 
	proposition} and Lemma \ref{lemma: existence of growth divisibility property} 
we may deduce the following lemma about $B$:

\begin{lemma}
	\label{lemma: specific properties of Frobenius matrix}
	We have the following:
	\begin{enumerate}
		\item The entries of $B$ lie in $\mathcal{O}_{\mathcal{E}_\infty}^{1,c} \cap 
		\mathcal{O}_{\mathcal{E}_\infty^\dagger}$.
		\item The block matrix $B_{2,1}$ has entries in 
		$\mathcal{O}_{\mathcal{E}_\infty}^{1,c} \cap 
		q^{N_0}\mathcal{O}_{\mathcal{E}_\infty}$.
		\item For $i>n_2+1$ we have $b_{i,1} \in 
		\mathcal{O}_{\mathcal{E}_\infty}^{1,c_1} \cap 
		q^{N_0}\mathcal{O}_{\mathcal{E}_\infty}$. 
		\item The matrix $B_{1,1}$ is of the form $q1_{n_2} + q^2C$, where
		$q^2C $ has entries in $T^{-1}
		\mathcal{O}_{\mathcal{E}_{\infty}^{\dagger,-}}$.
	\end{enumerate}
\end{lemma}

\begin{lemma} \label{unit-root coordinates have the correct growth}
	For each $i$ we have $\epsilon_i \in \mathcal{O}_{\mathcal{E}_\infty}^{1,c}\cap 
	q^{N_0} 
	\mathcal{O}_{\mathcal{E}_\infty}$.
\end{lemma}

\begin{proof}
	We first prove that $v_q(\mathbf{\epsilon})\geq N_0$. Let 
	$k=v_q(\mathbf{\epsilon})$. By the definition of $\mathbf{x}$
	and the fact that $q^{N_0}|b_{i,1}$ we
	see $v_q(\mathbf{x})\geq \min(k+1,N_0)$. Then from \eqref{unit-root 
		recursive equation}
	we see that $v_q(\mathbf{\epsilon})\geq \min(k+1,N_0)$. Thus, 
	$v_q(\mathbf{\epsilon}) \geq N_0$.
	Next, we show that $\mathbf{\epsilon}$ has entries in 
	$\mathcal{O}_{\mathcal{E}_\infty}^{1,c} + q^k\mathcal{O}_{\mathcal{E}_\infty}$
	for every $k$. For $k=N_0$ this is immediate. Let 
	$k\geq N_0$ and assume  
	$\mathbf{\epsilon}$ has entries in $\mathcal{O}_{\mathcal{E}_\infty}^{1,c} + 
	q^k\mathcal{O}_{\mathcal{E}_\infty}$. Then $\epsilon_i=u_i + q^kv_i$, where $u_i 
	\in 
	\mathcal{O}_{\mathcal{E}_\infty}^{1,c}$. As $q|B$, we know
	\begin{align}
	B\mathbf{\epsilon}^{\sigma T} &\equiv B \begin{bmatrix} u_2^\sigma & \dots & 
	u_n^\sigma 
	\end{bmatrix}^{ T} \mod q^{k+1}.\label{unitroot growth eq1}
	\end{align}
	By Lemma 
	\ref{dividing by p^s for rc growth}, the right side of \eqref{unitroot growth 
		eq1} is contained in 
	$\mathcal{O}_{\mathcal{E}_\infty}^{1,c}$.
	Thus, $B\mathbf{\epsilon}^{\sigma T}$ has entries  in 
	$\mathcal{O}_{\mathcal{E}_\infty}^{1,c} + 
	q^{k+1}\mathcal{O}_{\mathcal{E}_\infty}$. Similarly, 
	observe $\mathbf{x}$ has entries in $\mathcal{O}_{\mathcal{E}_\infty}^{1,c} 
	+ 
	q^{k+1}\mathcal{O}_{\mathcal{E}_\infty}$. By \eqref{unit-root 
		recursive 
		equation}, we know $\mathbf{\epsilon}$ has entries in 
	$\mathcal{O}_{\mathcal{E}_\infty}^{1,c} 
	+ q^{k+1} \mathcal{O}_{\mathcal{E}_\infty}$.
\end{proof}

\begin{lemma} \label{higher slope coordinates do not matter lemma}
	Let $B_0$ denote the $(n-1)\times (n-1)$ matrix $\begin{bmatrix} B_{1,1} & 0 \\ 0 
	& 0 \end{bmatrix}$ and let $\mathbf{x_0}=\begin{bmatrix} 
	b_{2,1} 
	& \dots & b_{n_2+1, 1} &  0 & \dots & 0 	\end{bmatrix}$. Then
	\begin{align*}
	R(B,\mathbf{x}^T) &\equiv R(B_0, \mathbf{x_0}^T) \mod 
	\mathcal{O}_{\mathcal{E}_\infty}^{1,c_1}\cap q^{N_0} 
	\mathcal{O}_{\mathcal{E}_\infty}.
	\end{align*}		
	
\end{lemma}

\begin{proof}
	First, note that $b_{i,j}\epsilon_j^\sigma\epsilon_i \in 
	\mathcal{O}_{\mathcal{E}_\infty}^{1,c_1} \cap 
	q^{N_0}\mathcal{O}_{\mathcal{E}_\infty}$. This follows from Lemma \ref{lemma: 
	existence 
		of growth divisibility property}, Lemma \ref{lemma: specific properties of 
		Frobenius 
		matrix}, and Lemma \ref{unit-root coordinates have the 
		correct growth}. Furthermore, for $i>n_2+1$ we know from Lemma \ref{lemma: 
		specific properties of Frobenius matrix} that $b_{i,1} \in 
	\mathcal{O}_{\mathcal{E}_\infty}^{1,c_1} 
	\cap 
	q^{N_0}\mathcal{O}_{\mathcal{E}_\infty}$. This means 
	$\mathbf{x}-\mathbf{x_0}$ has entries in 
	$\mathcal{O}_{\mathcal{E}_\infty}^{1,c_1} 
	\cap 
	q^{N_0}\mathcal{O}_{\mathcal{E}_\infty}$. By Lemma \ref{lemma: effect of 
	recursive on 
		highly divisible r,d things} we have
	\begin{align*}
	R(B,\mathbf{x})&\equiv R(B,\mathbf{x}_0) \mod 
	\mathcal{O}_{\mathcal{E}_\infty}^{1,c_1}\cap q^{N_0} 
	\mathcal{O}_{\mathcal{E}_\infty}.
	\end{align*}
	Let $B_1=\begin{bmatrix} 0 & B_{1,2} \\ B_{2,1} & B_{2,2} \end{bmatrix}$
	and set $S_m =  R(B_0; 
	\underbrace{B_1,\dots,B_1}_{m \text{ times}}, \mathbf{x}_0)$. 
	By Lemma \ref{addition formula}, it suffices to prove $S_m$ has entries in 
	$\mathcal{O}_{\mathcal{E}_\infty}^{1,c_1} \cap 
	q^{N_0}\mathcal{O}_{\mathcal{E}_\infty}$ for $m\geq 1$. Write
	$S_0=\begin{bmatrix}z_2 & \dots & z_n 
	\end{bmatrix}$. By Corollary \ref{rc growth recursive formula} we know $z_i \in 
	\mathcal{O}_{\mathcal{E}_\infty}^{1,c}\cap 
	q^{N_0}\mathcal{O}_{\mathcal{E}_\infty}$. Also, $z_i=0$ for $i>n_2+1$.
	Thus, by Lemma \ref{lemma: existence of growth 
		divisibility property} and Lemma \ref{lemma: specific properties of Frobenius 
		matrix}, the entries of $B_1 S_0$
	are contained in $\mathcal{O}_{\mathcal{E}_\infty}^{1,c_1} \cap 
	q^{N_0}\mathcal{O}_{\mathcal{E}_\infty}$. The result follows from Lemma 
	\ref{lemma: existence of growth 
		divisibility property} and Lemma \ref{lemma: effect of 
		recursive on highly divisible r,d things}.
\end{proof}

\begin{proof}
	(Of Proposition \ref{min frob prop for unit root})
	Let $\begin{bmatrix}
	y_2 & \dots & y_n
	\end{bmatrix}^T=R(B_0,\mathbf{x_0}^T)$ and note that $y_i=0$ for $i>n_2+1$. 
	By \eqref{main Frob eq: 1}, Lemma \ref{lemma: existence of growth divisibility 
		property} 
	and Lemma \ref{higher slope coordinates do not matter lemma} we see that 
	\begin{align} \label{unit-root approximation 1}
	\lambda &\equiv a_{1,1} + a_{1,2}y_2^\sigma + \dots + 
	a_{1,n_2+1}y_{n_2+1}^\sigma \mod \mathcal{O}_{\mathcal{E}_\infty}^{1,c_1}\cap 
	q^{N_0} 
	\mathcal{O}_{\mathcal{E}_\infty}.
	\end{align}
	Let $\mathbf{y_0}=\begin{bmatrix}
	y_2 & \dots & y_{n_2+1}
	\end{bmatrix}^T$ and note that $\mathbf{y_0} =R(B_{1,1};\begin{bmatrix} b_{2,1} & 
	\dots & 
	b_{n_2+1,1} \end{bmatrix}^T)$.
	By Lemma \ref{lemma: specific properties of Frobenius matrix}, the hypothesis 
	of Proposition \ref{main 
		$r$-recursive Prop} is satisfied. Therefore, there exist $a>0$
	and a column matrix $\mathbf{z}=[z_2,\dots,z_{n_2+1}]^T$ with entries in 
	$\mathcal{N}_a$ (see \S \ref{subsection: spaces of recursive frobenius solutions} 
	for the definition of this space) such that 
	\begin{align*}
	\mathbf{y}_0 \equiv  \mathbf{z} \mod \mathcal{O}_{\mathcal{E}_\infty}^{1,c_1}\cap 
	q^{N_0} 
	\mathcal{O}_{\mathcal{E}_\infty}.
	\end{align*}
	By Lemma \ref{lemma: break up overconvergent things}, after increasing $a$ we may 
	assume that $a_{1,i}=u_i + v_i$, where $u_i \in 
	\mathcal{O}_{E_a}((T))$ and $v_i \in \mathcal{O}_{\mathcal{E}_\infty}^{1,c_1}\cap 
	q^{N_0} 
	\mathcal{O}_{\mathcal{E}_\infty}$. Then we set
	\begin{align*}
	\lambda_{c_1} &= u_1 + u_2z_2^\sigma + \dots + u_{n_2+1}z_{n_2+1}^\sigma,
	\end{align*}
	which is contained in $\mathcal{M}_a$ (see \S \ref{subsection: stable growth for 
	solutions of Frobenius equations} for the definition of this space).
	From Lemma \ref{lemma: existence of growth divisibility property} we have
	\begin{align*}
	\lambda &\equiv \lambda_{c_1} \mod \mathcal{O}_{\mathcal{E}_\infty}^{1,c_1}\cap 
	q^{N_0} 
	\mathcal{O}_{\mathcal{E}_\infty},
	\end{align*} 
	which proves the proposition.
\end{proof}

\section{Main results}
\label{Section: main results}

\subsection{Local results}
\label{subsection: main slope filtration and log decay results}
\begin{theorem} \label{diffences of slopes give log-decay}
	Adopt the notation from \S \ref{subsection: local setup}. Let 
	$r_i=\frac{1}{\alpha_{i+1}-\alpha_i}$. Then $M_i$ has $r_i$-log-decay.
\end{theorem}
\begin{proof}
	We first show that $M_i$ has $r_i$-log-decay for $T$. The general result will 
	follow
	by the fact that the slope filtration is functorial. Note that $M_i$ has 
	$r_i$-log-decay for $T$ if and only if $\det(M_i)$ has $r_i$-log-decay for $T$.
	This follows from a standard exterior power trick (\cite[Theorem 
	2.4.2]{katz-slope_filtration} or
	\cite[proof of Proposition 6.1]{JKM-higher_dim}).
	Thus, by replacing $M$ with $\wedge^{rank(M_i)}M$, it suffices to prove the
	result when $i=1$ under the assumption that $M_1$ has rank one. 
	Let $A$ be a Frobenius matrix of $M$ and let $C$ be the corresponding connection 
	matrix.
	By Lemma \ref{lemma: log growth and slope filtrations}, there exists
	$B \in GL_n(\mathcal{O}_{\mathcal{E}^{r_i}})$ such that
	\begin{align} \label{Frob matrix after change of basis}
	B A B^{-\sigma} &= \begin{pmatrix} X & Y \\ 0 & Z \end{pmatrix}.
	\end{align}
	The connection matrix after this change of basis is 
	\begin{align*}
	B^{-1}(\delta_T(B) + CB) &= \begin{pmatrix} Q & R \\ S & U \end{pmatrix},
	\end{align*}
	whose entries are contained in $\mathcal{O}_{\mathcal{E}^{r_i}}$.
	From \eqref{Frob matrix after change of basis} we see that
	$M\otimes \mathcal{O}_{\mathcal{E}^{r_i}}$ has a sub-$\sigma$-module
	$M_1'$ of rank one. We will prove
	that $S=0$. This will imply that $M_1'$ is fixed by $\nabla$,
	which means $M_1'$ is a $(\sigma,\nabla)$-module over $\mathcal{E}^{r_i}$. The 
	uniqueness of the slope filtration
	will imply $M_1'=M_1$. 
	
	The 
	compatibility between
	the Frobenius and the connection gives the relation
	\begin{align*}
	SX &= \frac{dT^\sigma}{dT} ZS^\sigma.
	\end{align*}
	Then $v_p(SX)=v_p(S)$,
	since $X$ is a $p$-adic unit. Also, note that $v_p(\frac{dT^\sigma}{dT}Z)>0$ and 
	$v_p(S)=v_p(S^\sigma)$.
	In particular, if $v_p(S)<\infty$ then 
	$v_p(\frac{dT^\sigma}{dT}ZS^\sigma)>v_p(S)$.
	It follows that $v_p(S)=\infty$.
\end{proof}

\begin{remark}
	The proof of Theorem \ref{diffences of slopes give log-decay} uses an exterior power trick to reduce to the case of $i=1$ and $M_1$ has rank one. It is natural to ask if the same trick can be applied to Conjecture \ref{log-decay and slope filtrations}. The answer is no (at least without some additional work). 
	An issue arises
	if $\wedge^{rank(M_i)} M$ is not irreducible.
	It could happen that $\wedge^{rank(M_i)} M$ has an irreducible subobject $N$ such that $\det(M_i)$ is the first step
	in the slope filtration for $N$. It is not clear that the second smallest slope of $N$ will be the second smallest
	slope of $\wedge^{rank(M_i)} M$.
\end{remark}

\begin{corollary}  \label{local Drinfeld-Kedlaya}
	Keep the notation from Theorem \ref{diffences of slopes give log-decay}. If 
	$\alpha_{i+1}-\alpha_i>1$, then $M_i$ is overconvergent.
\end{corollary}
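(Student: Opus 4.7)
The plan is to combine the two main results of \S\ref{subsection: main slope filtration and log decay results}. Since $\alpha_{i+1}-\alpha_i > 1$ by hypothesis, we have $r := 1/(\alpha_{i+1}-\alpha_i) < 1$, and Theorem \ref{diffences of slopes give log-decay} gives that the subobject $M_i$ of $M \otimes_{\mathcal{E}^\dagger}\mathcal{E}$ has $r$-log-decay. The task is then to upgrade $r$-log-decay with $r<1$ to overconvergence, which is precisely the content of Proposition \ref{nothing between overconvergent and $1$-log-decay} in the rank-one case.

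When $M_i$ has rank one, Proposition \ref{nothing between overconvergent and $1$-log-decay} applies verbatim and the conclusion is immediate. In the higher-rank case, the natural first reduction is to pass to $\det(M_i)$, which is a rank-one $(\varphi,\nabla)$-module inheriting $r$-log-decay from $M_i$; the proposition then yields that $\det(M_i)$ is overconvergent. To transfer overconvergence from $\det(M_i)$ back to $M_i$ itself, I plan to use the explicit construction appearing in the proof of Theorem \ref{diffences of slopes give log-decay}: that argument produces a change of basis $B \in GL_n(\mathcal{O}_{\mathcal{E}^r})$ placing the Frobenius matrix of $M$ in block upper-triangular form with $M_i$ realized as the upper-left block over $\mathcal{E}^r$, and the connection respecting this block structure.

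The main obstacle is the final descent from $\mathcal{E}^r$ to $\mathcal{E}^\dagger$ at the level of the whole upper-left block. My plan is to apply Proposition \ref{nothing between overconvergent and $1$-log-decay} not merely to $\det(M_i)$ but inductively to each rank-one character arising from the slope filtration of $M_i$ (after twisting each graded piece to slope zero, which preserves $r$-log-decay), assembling the resulting overconvergent characters with the block-triangular structure over $\mathcal{E}^r$ to produce a basis of $M_i$ over $\mathcal{E}^\dagger$. The strict inequality $r<1$ enters essentially through the mechanism of Proposition \ref{nothing between overconvergent and $1$-log-decay}, where Corollary \ref{log-decay condition implies log bounded monodromy corollary} provides the bound $s_n < cp^{rn}$ that is incompatible, when $r<1$, with the abelian ramification lower bound $s_n \geq ps_{n-1}$ of Lemma \ref{abelian ramification bound}, thereby forcing finite monodromy and hence overconvergence of each rank-one piece.
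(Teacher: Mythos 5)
Your opening steps match the paper: Theorem \ref{diffences of slopes give log-decay} gives $r$-log-decay with $r=1/(\alpha_{i+1}-\alpha_i)<1$, and Proposition \ref{nothing between overconvergent and $1$-log-decay} (via Corollary \ref{log-decay condition implies log bounded monodromy corollary} and Lemma \ref{abelian ramification bound}) upgrades this to overconvergence of the rank-one object $\det(M_i)$. The gap is in your descent from $\det(M_i)$ back to $M_i$. Your plan is to apply the rank-one proposition ``inductively to each rank-one character arising from the slope filtration of $M_i$'' and then assemble these over $\mathcal{E}^r$. This fails for two reasons. First, the inner graded pieces of $M_i$ are isoclinic but not rank one in general, and, more seriously, the hypothesis $\alpha_{i+1}-\alpha_i>1$ says nothing about the gaps $\alpha_{j+1}-\alpha_j$ for $j<i$; those inner steps only have $r_j$-log-decay with $r_j=1/(\alpha_{j+1}-\alpha_j)$, which can be $\geq 1$, and they typically have infinite monodromy (e.g.\ the unit-root part $M_1\subset M_i$ when $\alpha_2-\alpha_1<1$), so they are genuinely \emph{not} overconvergent even though $M_i$ is --- this is the central phenomenon of the paper. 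Second, even in the favorable case where every graded piece is overconvergent as an abstract object, an extension of overconvergent objects in the convergent category need not be overconvergent, so ``assembling the resulting overconvergent characters'' into an $\mathcal{E}^\dagger$-basis of $M_i$ is itself an unproved (and in general false) step.

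The paper closes this gap differently: one observes that $M_i$ is a subobject of $M$ in $\Mphi{\mathcal{E}^\dagger}$ if and only if the line $\det(M_i)$ is a subobject of $\wedge^{rank(M_i)}M$ in $\Mphi{\mathcal{E}^\dagger}$, and then, knowing $\det(M_i)$ is overconvergent as an object, invokes Kedlaya's full faithfulness theorem (\cite{Kedlaya-fully_faithful}) to see that the inclusion $\det(M_i)\hookrightarrow \wedge^{rank(M_i)}M\otimes\mathcal{E}$ is already defined over $\mathcal{E}^\dagger$. Note this last point is also missing from your argument even for the determinant: being in the essential image of $\Mphi{\mathcal{E}^\dagger}$ does not by itself say the inclusion map descends; full faithfulness (or an equivalent descent statement) is what converts abstract overconvergence of $\det(M_i)$ into an overconvergent subobject, and the exterior-power equivalence then transports this to $M_i$.
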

\begin{proof}
	Assume that $\alpha_{i+1}-\alpha_i>1$. Let $g \in \Theta$. From Theorem 
	\ref{diffences of slopes give log-decay}
	we know that for $\det(M_i^g)$ has $r_i$-log-decay, where $r_i<1$. By Corollary 
	\ref{corollary: small log decay}, we see that 
	$\det(M_i)$ is overconvergent.  The fully faithfulness of $\iota^\dagger$ (see 
	\cite{Kedlaya-fully_faithful}) implies $\det(M_i)$ is a 
	subobject of $\wedge^{rank(M_i)} M$ in $\Fisoc^\dagger(\Spec(F))\otimes L$. The 
	result follows
	from the same exterior product argument used in Theorem \ref{diffences of slopes 
	give log-decay}.
\end{proof}

\begin{corollary} \label{log-decay conjecture holds for ordinary}
	Let $M$ be an irreducible object of $\Fisoc^\dagger(\Spec(F))$
	with integral slopes. Then Conjecture \ref{log-decay and slope filtrations} holds.
\end{corollary}
\begin{proof}
	We know that $M_i$ has $1$-log-decay by Theorem \ref{diffences of slopes give 
		log-decay}.
	If there exists $r<1$ such that $M_i$ has $r$-log-decay, then $\det(M_i)$
	also has $r$-log-decay. From Corollary \ref{corollary: small log decay} 
	we find that $\det(M_i)$ is overconvergent. Then Kedlaya's fully faithfulness 
	theorem tells us that $M$ is not irreducible. 
\end{proof}

\begin{theorem} \label{main local result}
	Adopt the notation from Theorem \ref{diffences of slopes give log-decay} with
	the additional assumption that $M$ is 
	irreducible. Let $\rho_i$ denote a 
	Galois representation associated to $\det(M_i)$ (see \S \ref{paragraph:galois 
		reps for isoclinic}). The 
	following hold:
	\begin{enumerate}
		\item $\rho_i$ has $r_i$-bounded monodromy.
		\item If $M$ has integer slopes, then $\rho_i$ has stable 
		monodromy. 
	\end{enumerate}
\end{theorem}

\begin{proof}	
	The first statement follows from Corollary \ref{corollary: small log decay} and 
	Proposition \ref{diffences of slopes give log-decay}. Assume the slopes of 
	$M$ are integers.
	It suffices 
	to prove this result for $i=1$ under the assumption that $M_1$ is unit-root and 
	has rank one. After replacing $F$ with a finite ramified extension we may assume 
	that $\rho_1$ is harshly ramified (see Corollary \ref{corollary: harsh 
		ramification base change}). We may also assume Proposition 
	\ref{min frob prop for unit root} is satisfied. These base changes do not change 
	the result by Corollary \ref{corollary: pseudostable monodromy after base 
		change}. Since $\rho_1$ is harshly 
	ramified, there exists $c_1$ such that $s_k(\rho_1) >c_1p^{k}$ for all $k\geq 0$. 
	By Proposition \ref{min frob prop for unit root}, for each $g\in \Theta$ 
	there exists a maximal Frobenius $\lambda_g$ of $M_1^g$ with
	$\lambda_g=\lambda_{g,c_1} + z_{g,c_1}$, where $\lambda_{g,c_1} \in \mathcal{M}$ 
	and $z_{g,c_1}
	\in \mathcal{O}_{\mathcal{E}_\infty}^{1,c_1}$. From Corollary \ref{corollary: 
		harshly ramified monodromy}, we know
	\begin{align*}
	s_k(\rho_1) &= \max_{g \in \Theta} \{-w_k(\lambda_{g,c_1})\}
	\end{align*} 
	for all $k$. The theorem follows from Proposition \ref{proposition: main 
		recursive growth result}.
\end{proof}

\begin{corollary} \label{main local proposition}
	Let $\psi: V \to \Spec (F)$ be a smooth proper morphism. Then
	$M=R^i \psi_* \mathcal{O}_{V,cris}$ is an object of $\Fisoc(\Spec(F))$.
	Assume that $R^i \psi_* \underline{\Q_p^{et}}$ and let $\rho$ be the Galois 
	representation corresponding to the lisse \'etale sheaf  
	$\det(R^i \psi_* \underline{\Q_p^{et}})$. 
	\begin{enumerate}
		\item Let $\alpha$ denote the first nonzero slope of $M$ and let 
		$r=\frac{1}{\alpha}$. Then $\rho$ has $r$-bounded monodromy.
		\item Assume $M$ has integer slopes. Then either $\rho$ has finite monodromy 
		or $\rho$ has stable monodromy. 
	\end{enumerate}
\end{corollary}
\begin{proof}
	By Theorem \ref{theorem: Berthelot-Kedlaya} we know $R^i \psi_* \mathcal{O}_{V,cris}$ is an object of 
	$\Fisoc^\dagger(\Spec(F))$. The corollary follows 
	from
	Theorem \ref{main local result}, as $M^{unit}$ corresponds to $R^i \psi_* 
	\Q_p^{et}$.
\end{proof}

\subsection{Global results}
\label{subsection: main monodromy results}

\begin{theorem} (Drinfeld-Kedlaya) \label{global Drinfeld-Kedlaya}
	Let $C$ be a smooth curve and let $M$ be an irreducible 
	object of $\Fisoc(C)\otimes L$ or $\Fisoc^\dagger(C)\otimes L$. The differences 
	between the 
	consecutive generic slopes
	of $M$ are bounded by one. 
\end{theorem}
\begin{proof}
	
	Assume that $M$ has two consecutive generic slopes whose difference is greater
	than one. By taking exterior powers and twisting we may assume that
	the first two generic slopes of $M$ are $0$ and $\alpha>1$. In particular, 
	$(1,0)$ 
	is a vertex on
	the generic Newton polygon of $M$. For any map $x \to C$ we let $M|_x$ denote the pullback of $M$ to $x$. 
	We will show that if $x \to C$ corresponds to a closed point of $C$, then $NP(M|_x)$ contains the vertex 
	$(1,0)$. 
	Let $\mathcal{O}_F=\widehat{\mathcal{O}_{C,x}}$ and let $F$
	be the fraction field of $\mathcal{O}_F$. Let $S=\Spec (\mathcal{O}_F)$ and
	let $T=\Spec (F)$. Then $M|_T$ is overconvergent. 
	By Corollary \ref{local Drinfeld-Kedlaya} we know that
	$M^{unit}|_{T}$ is overconvergent. In particular, there exists a finite 
	totally ramified extension
	$F'$ of $F$ such that $M^{unit}|_{\Spec(F')}$ extends to 
	$\Spec(\mathcal{O}_{F'})$.
	This means that if $x' \in \Spec(\mathcal{O}_{F'})$ is the special point,
	then $NP(M|_{x'})$ contains the vertex $(1,0)$. However,
	since $F'$ is totally ramified over $F$ we have $M|_{x'} \cong M|_{x}$,
	which means $NP(M|_{x})$ contains the vertex $(1,0)$.
	
	By the preceding paragraph, we know that for each $x \in C$, closed or generic, the Newton polygon
	$NP(M|_x)$ contains the point $(0,1)$. Then if $M$ is in $\Fisoc(C)\otimes L$, Katz' slope filtration theorem (see
	\cite[Theorem 2.4.2]{katz-slope_filtration})
	implies that $M$ has a subobject $M^{unit}$, and thus $M$ is reducible in 
	$\Fisoc(C)\otimes L$. Now, let $M$ be an object of $\Fisoc^\dagger(C)\otimes L$. 
	Let $X$ be the smooth compactification of $C$ and let
	$y \in X-C$ be a point at infinity. We let $K$
	be the fraction field of $\widehat{\mathcal{O}_{X,y}}$. Then $M|_{\Spec(K)}$
	is overconvergent and by Corollary \ref{local Drinfeld-Kedlaya}
	we know that $M^{unit}|_{\Spec (K)}$ is also overconvergent. It follows that
	$M^{unit}$ is overconvergent. Then by Kedlaya's fully faithfulness theorem
	we know that $M^{unit}$ is a subobject of $M$ in $\Fisoc^\dagger(C) \otimes L$.
	
\end{proof}

\begin{remark} \label{remark: drinfeld-kedlaya}
	Our proof of Theorem \ref{global Drinfeld-Kedlaya} is somewhat perpendicular
	to the work of Drinfeld and Kedlaya. In \cite{Drinfeld-Kedlaya},
	they shrink $C$ until the Newton polygons is the same at each point of $C$. 
	They then prove that certain $Ext$
	groups vanish, using ideas that trace back to Kedlaya's thesis
	(see \cite[5.2.1]{Kedlaya-thesis}). From more advanced faithfulness results of 
	Kedlaya, Shiho,
	and de Jong, it follows from general nonsense that $M$ decomposes
	into the direct sum of two $\mathbf{F}$-isocrystals. In contrast,
	we use Corollary \ref{corollary: small log decay} and Theorem \ref{diffences of 
		slopes give log-decay} to prove the Newton polygon at each $x 
	\in C$
	agrees with the generic Newton polygon.
\end{remark}

\begin{theorem} \label{main global theorem}
	Let $U$ be a smooth curve over $k$ and let $C$ be its smooth
	compactification. Let $M$ be an irreducible object of $\Fisoc^\dagger(U)$. After 
	replacing $U$ with a dense open subset,
	there is a slope filtration $0=M_0 \subset M_1 \subset \dots \subset M_d = 
	\iota^\dagger(M)$, where each graded piece $M_{i+1}/M_i$ is isoclinic of slope 
	$\alpha_i$. Let $\rho_i$ be a $p$-adic character of 
	$\pi_1^{et}(U)$ corresponding to $\det(M_i)$.
	\begin{enumerate}
		\item $\rho_i$ has $r_i$-bounded genus growth, where 
		$r_i=\frac{1}{\alpha_{i+1}-\alpha_i}$.
		\item If the slopes of $M$ are all integers, then $\rho_i$ is genus 
		psuedo-stable.
	\end{enumerate}
\end{theorem}

\begin{proof}
	Let $D=C-U$. For each $x\in D$, let $F_x=\Frac{\widehat{\mathcal{O}}_{C,x}}$ be 
	the 
	fraction field of the completed local ring at $x$. Let $\rho_{x,i}$ denote the 
	representation induced by pulling back $\rho_i$ along $\eta_x: \Spec(F_x) \to U$. 
	Then $\rho_{x,i}$ corresponds to $\eta_x^*(\det(M_i))$. By the Riemann-Hurwitz 
	formula (see \cite{Oort-RH_formula}) we know that $\rho_i$ is genus psuedo-stable 
	if
	$\rho_{x,i}$ has stable monodromy for each $x \in D$. Also, $\rho_i$ has 
	$r_i$-bounded genus growth 
	if $\rho_{x,i}$ has $r_i$-bounded monodromy for each $x \in D$. The result 
	follows 
	from Theorem \ref{main 
		local result}.
\end{proof}

\begin{corollary} \label{main global proposition}
	Let $U$ be a smooth curve and let $\psi: V \to U$ be a smooth proper morphism.
	Let
	$M=R^i \psi_* \mathcal{O}_{V,cris}$ and assume that the generic
	slopes of $M$ are integers. Then the $p$-adic character of 
	$\pi_1^{et}(U)$ corresponding to $\det(R^i \psi_* \Q_p^{et})$ is genus 
	psuedo-stable.
\end{corollary}
\begin{proof}
	Let $X$ be the smooth compactificaton of $U$. By Theorem \ref{theorem: Berthelot-Kedlaya} applied to each
	$x \in X-U$, we know
	that $M$ is an object of $\Fisoc^\dagger (U)$. The $p$-adic character of 
	$\pi_1^{et}(U)$ corresponding to $\det(R^i \psi_* \Q_p^{et})$ is the same as 
	$\rho_1$ 
	from Theorem \ref{main global theorem}.
\end{proof}

\appendix
\section{A local Berthelot's conjecture for constant sheafs}
\label{section: berthelot's conjecture}
Berthelot's conjecture states that the higher direct images of an overconvergent $F$-isocrystal along
a smooth proper morphism are again overconvergent $F$-isocrystals. 
In this appendix we prove a local version of this conjecture for the higher direct images of the constant sheaf. This is originally due to Kedlaya in \cite[Chapter 7]{Kedlaya-thesis}, as an application of a fully faithfulness result. However, to 
the best of our knowledge there is no published proof. We follow closely the proof in \cite[Chapter 7]{Kedlaya-thesis}.
\begin{theorem} \label{theorem: Berthelot-Kedlaya}
	Let $\psi: V \to \Spec (F)$ be a smooth proper morphism. Then
	$M=R^i \psi_* \mathcal{O}_{V,cris}\otimes \Q_p$, which a priori is an object of $\Fisoc(\Spec(F))$, is overconvergent.
\end{theorem} 
\noindent We begin with two lemmas.
\begin{lemma} \label{lemma: log-crystals}
	Let $\overline{\psi}: \overline{V} \to \Spec(\mathfrak{K}[[T]])$ be a proper morphism whose special fiber is reduced with strict normal crossings. Let $\psi$ be the generic fiber of of $\overline{\psi}$.
	Let $A$ be the standard fine log structure associated to $\Spec(\mathfrak{K}[[T]])$ and let $B$ be the 
	fine log structure associated to the special divisor of $\overline{V}$. In particular, $\overline{\psi}:(\overline{V},B) \to (\Spec(\mathfrak{K}[[T]]), A)$
	is a smooth map of log schemes. Then $R^i\psi_*(\mathcal{O}_{V,cris})\otimes \Q_p$ is
	an object of $\Fisoc^\dagger(\Spec(F))$.
\end{lemma}

\begin{proof}
	First note that $R^i\psi_*(\mathcal{O}_{V,cris})\otimes \Q_p$ and $R^i\overline{\psi}_*(\mathcal{O}_{\overline{V},cris}^{log})\otimes \Q_p$ give rise to the same element of
	$\Fisoc(\Spec(F))$. That is, the restriction of $R^i\overline{\psi}_*(\mathcal{O}_{\overline{V},cris}^{log})\otimes \Q_p$
	to the generic point of $\Spec(\mathfrak{K}[[T]])$ is the same as $R^i\psi_*(\mathcal{O}_{V,cris})\otimes \Q_p$.
	Thus, it suffices to prove the result for $R^i\overline{\psi}_*(\mathcal{O}_{\overline{V},cris}^{log})\otimes \Q_p$.
	First, we use $\mathcal{O}_L[[T]]$ as a test object in the log-crystalline site over $\mathfrak{K}[[T]]$. Thus,
	$R^i\overline{\psi}_*(\mathcal{O}_{\overline{V},cris}^{log})\otimes \Q_p$ gives rise to a free $\mathcal{O}_L[[T]]\otimes L$-module with a connection and semi-linear Frobenius map. From \cite[Proposition (2.24)]{Hyodo-Kato} we deduce that the Frobenius structure must be an isomorphism. Since $\mathcal{O}_L[[T]]\otimes L \subset \mathcal{E}^\dagger$, we see that the $(\sigma,\nabla)$-module over $\mathcal{E}$ associated to 
	$R^i\psi_*(\mathcal{O}_{V,cris})\otimes \Q_p$ descends to $\mathcal{E}^\dagger$. 
\end{proof}

\begin{lemma}
	\label{lemma: injection on cohomology}
	Let $\psi:X \to \Spec(F)$ and $\eta:Y \to \Spec(F)$ smooth, irreducible, proper varieties over $\Spec(F)$. Let $f:X\to Y$ be a surjective morphism. 
	Then the induced map $f^*: R^i\eta_*(\mathcal{O}_{Y,cris}) \to R^i\psi_*(\mathcal{O}_{X,cris})$ is injective
	and there exists a projector mapping $R^i\psi_*(\mathcal{O}_{X,cris})$ to the image of $f$.
\end{lemma}
\begin{proof}
	Crystalline cohomology for varieties over $\Spec(F)$ is a Weil cohomology theory (see e.g. \cite{Berthelot-crystalline-cohomology-book}). The injectivity then follows from \cite[Proposition 1.2.4]{Kleiman-alg_cycles_weil_conj}. Furthermore, from the proof of \cite[Proposition 1.2.4]{Kleiman-alg_cycles_weil_conj} we see that $f_*f^*$ is injective. The projector is then $\frac{1}{d}f^*f_*$, where $d$ is the generic degree of $f$. 
\end{proof}
\begin{corollary}
\label{corollary: OC using projector}
Let $X$ and $Y$ be as in Lemma \ref{lemma: injection on cohomology}. If $R^i\psi_*(\mathcal{O}_{X,cris})$ is an object of
$\Fisoc^\dagger(\Spec(F))$, then $R^i\eta_*(\mathcal{O}_{Y,cris})$ is an object of
$\Fisoc^\dagger(\Spec(F))$.
\end{corollary}
\begin{proof}
	Recall from \S \ref{subsection: fisocrystal section} that $\iota^\dagger$ is fully faithful. This means that
	the projector from Lemma \ref{lemma: injection on cohomology}, which is a morphism in $\Fisoc(\Spec(F))$, must be
	a morphism in $\Fisoc^\dagger(\Spec(F))$. The corollary follows.
\end{proof}

\begin{lemma} \label{lemma: finite field extension and OC}
	Let $\phi:\Spec(F_0)\to \Spec(F)$ be a finite field extension. Let $M$ be an object of $\Fisoc(\Spec(F))$. If
	$\phi^*M$ is overconvergent, then $M$ is overconvergent.
\end{lemma}
\begin{proof}
	For separable extensions this follows from the fully faithfulness of $\iota^\dagger$ and
	the fact that $M$ is a direct summand of $\phi_*\phi^*M$. 
	For the inseparable case, it suffices to prove the lemma for the degree $p$ case.
	Thus, we may assume $F=\mathfrak{K}((T))$ and $F_0 = \mathfrak{K}((T^{\frac{1}{p}}))$. 
	We may view $\mathcal{E}_{F,L}$ as a subfield of $\mathcal{E}_{F_0,L}$ and note that
	$\sigma$ extends to a Frobenius endomorphism of $\mathcal{E}_{F_0,L}$ sending $T^{\frac{1}{p}}$ to $T$. 
	Let $M$ be a $(\sigma,\nabla)$-module over $\mathcal{E}_{F,L}$ and let
	$A$ be a Frobenius matrix of $M$ corresponding to a basis $\mathbf{e}$ (i.e. $\varphi(\mathbf{e})=A\mathbf{e}$).
	Assume that $M \otimes \mathcal{E}_{F_0,L}$ descends to a $(\sigma,\nabla)$-module
	over $\mathcal{E}_{F_0,L}^\dagger$. This means that for some $B$ with
	entries in $\mathcal{E}_{F_0,L}$, the connection matrix $C_0$ and the Frobenius matrix $A_0=BAB^{-\sigma}$ for the basis $B\mathbf{e}$
	are overconvergent (i.e. have entries in $\mathcal{E}_{F_0,L}^\dagger$). Note that
	$B^\sigma$ has entries in $\mathcal{E}_{F,L}^\dagger$ since $F_0$ is a degree $p$ extension of $F$.
	In particular, we see that $B^\sigma A \mathbf{e}$ is a basis of $M$. This means the connection and Frobenius 
	matrices for $B^\sigma A \mathbf{e}$ have entries in $\mathcal{E}_{F,L}$. Finally,
	we observe that $B^\sigma A \mathbf{e}$
	is obtained from $B\mathbf{e}$ by multiplication by $A_0^{-1}$, which is contained in 
	$\mathcal{E}_{F_0,L}^\dagger$. This means that the connection and Frobenius matrices for 
	$B^\sigma A \mathbf{e}$ have overconvergent entries.
	
\end{proof}

\begin{proof} (Of Theorem \ref{theorem: Berthelot-Kedlaya}) 
	First assume that $X$ is projective over $F$. Take an embedding $X \hookrightarrow \mathbb{P}_{F}^n$ and let
	$\overline{X}$ be the Zariski closure of $X$ in $\mathbb{P}_{\mathfrak{K}[[T]]}^n$. By \cite[Theorem 6.5]{deJong-alterations},
	there exists a finite extension $\Spec(\mathfrak{K}'[[u]]) \to \Spec(\mathfrak{K}[[T]])$, an alteration $\phi_1: X_1 \to X$
	where $X_1$ is a $\Spec(\mathfrak{K}'[[u]])$-variety,
	and an open immersion $j_1: X_1 \to \overline{X}_1$ such that the pair $(\overline{X}_1, \overline{X}_1\backslash X_1)$ is semi-stable. Furthermore, we may assume that $\overline{X}_1\backslash X_1$ is equal to the special fiber. Let $F_1= \mathfrak{K}'((u))$ and consider the map
	$\psi_1:\overline{X}_1 \times_{\Spec(\mathfrak{K}'[[u]])} \Spec(F_1) \to \Spec(F_1)$. By Lemma \ref{lemma: log-crystals}
	we know that $R^i(\psi_1)_*(\mathcal{O}_{X_1 \times_{\Spec(\mathfrak{K}'[[u]])} \Spec(F_1),cris})$
	is an object of $\Fisoc^\dagger(\Spec(F_1))$. Then by applying Corollary \ref{corollary: OC using projector} to the
	map of $\Spec(F_1)$-varieties $X_1 \times_{\Spec(\mathfrak{K}'[[u]])} \Spec(F_1) \to X \times_{\Spec(F)}\Spec(F_1)$ induced by $\phi$, we see that 
	$R^i (\phi\times \Spec(F_1))_* \mathcal{O}_{X \times_{\Spec(F)}\Spec(F_1),cris}$ is an object of
	$\Fisoc^\dagger(\Spec(F_1))$. By base change for crystalline cohomology (see e.g. \cite[Corollary 7.12]{Berthelot-Ogus}) and Lemma \ref{lemma: finite field extension and OC} this implies $R^i \phi_*\mathcal{O}_{X,cris}$ is overconvergent. 
	
 	Now consider general $V$. By \cite[Theorem 4.1]{deJong-alterations} we know that there is an alteration
 	$V_1 \to V$ that is projective and that $V_1$ will be smooth over a finite extension of $F$. The result follows by
 	repeating an argument from the previous paragraph. 
\end{proof}

\bibliographystyle{plain}
\bibliography{finalversion.bib}

\end{document}